\newtheorem{lem}{Lemma}
\newtheorem{thm}{Theorem}
\newtheorem{atheo}{Theorem}
\newtheorem{corl}{Corollary}
\newtheorem{defin}{Definition}
\newtheorem{proposition}{Proposition}
\newtheorem{rem}{Remark}
\newcommand{\R}{\mathbb{R}}
\newcommand{\eps}{\varepsilon}
\newcommand{\ds}{\displaystyle}
\newcommand{\sign}{\operatorname{sign}}
\begin{document}
\title{A refined description of evolving interfaces in certain nonlinear wave equations}

\author{Mohammad El Smaily \footnote{\textsc{Department of Mathematics \& Statistics, University of New Brunswick, Fredericton, NB, E3B5A3, Canada.} \hskip 5 pt
  \textit{E-mail address}: \texttt{m.elsmaily@unb.ca}} ~~ ~~ Robert L. Jerrard \footnote{\textsc{Department of Mathematics, University of Toronto, Toronto, Ontario, M5S2E4, Canada.}
  \hskip 5 pt \textit{E-mail address:} \texttt{rjerrard@math.toronto.edu}}}

\date{August 19, 2017}

\maketitle
\abstract{We improve on recent results that establish the existence of
solutions of certain semilinear wave equations possessing an interface
that roughly sweeps out a timelike surface of vanishing mean curvature
in Minkowski space. Compared to earlier work, we  present sharper estimates,
in stronger norms, of the solutions in question. }
\section{Introduction}

The goal of this paper is to refine
recent work  \cite{galjer, jer0909} that
proves the existence of a solution $u=u_{\eps}(t,x)$ ($x\in\R^{n}$, $t\in \R$)  of the semilinear wave equation 
\begin{equation}\label{nlw}
u_{tt}-\Delta u +\frac{2}{\eps^{2}}(u^{2}-1)u=0,\qquad\quad 0 < \eps \ll 1\mbox{ fixed}
\end{equation}
such that, roughly speaking, $u$ exhibits an interface near
a timelike hypersurface
whose Minkowskian mean curvature identically vanishes, as long as the 
hypersurface remains smooth.
To describe the problem, let $\Gamma$ be a smooth timelike embedded hypersurface in $(-T_*,T^*)\times \R^n$, for some $T_*,T^*>0$
of vanishing Minkowski mean curvature, and such that 
\[
\Gamma_t  := \{ x\in \R^n : (t,x)\in \Gamma\}
\]
is homeomomorphic to $\mathbb{S}^{n-1}$ for every $t$. The condition that the Minkowskian mean curvature vanishes
is a nonlinear geometric wave equation, and smooth
solutions are known to exist, locally in  $t$, for suitable compact Cauchy data,
see for example  \cite{mil}. 
We remark that when $n=2$ (which we
will assume throughout most of this paper) the equation
is in some sense integrable and there is essentially an
explicit formula for solutions (see Section \ref{background1} below).

The fact that $\Gamma_t$ is a topological sphere for every $t$ implies that 
$(-T_*, T^*) \times \R^n) \setminus \Gamma$ consists of two components,
one bounded and one unbounded. Let $\mathcal O$ denote the bounded component, and
\[
\sign_{\mathcal O}(t,x) := \begin{cases}
1&\mbox{ if }(t,x)\in {\mathcal O}\\
-1&\mbox{ if }(t,x)\in {\mathcal O}^{c}.
\end{cases}
\]

The following result was proved in \cite{galjer, jer0909}:

\begin{atheo}[\cite{galjer, jer0909}]\label{TheoremA}
Given $\Gamma$ as above, for every $\eps\in (0,1]$
there exists a  solution $u$ of (\ref{nlw}) such that 
for any $T_0 < T_*$ and $T^0<T^*$,
 \begin{equation}\label{u_weakly_close_toA}
 \left\Vert u_\eps- \sign_{\mathcal O}\right\Vert_{L^{2}((- T_0,T^0)\times\R^{n})}\leq C\sqrt{\eps}.
 \end{equation}
 for a  constant $C$ that may depend on $T_0,T^0$ but is independent of $\eps$.
\end{atheo}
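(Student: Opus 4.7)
The plan is to build an explicit approximate solution from the signed Lorentzian distance to $\Gamma$, show that the vanishing mean curvature hypothesis kills the leading residual, and then promote the approximation to an exact solution via a nonlinear stability argument.

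I would introduce the signed Lorentzian distance $d(t,x)$ to $\Gamma$, well defined in a tubular neighborhood of $\Gamma$ and arranged so that $\sign(d)=\sign_{\mathcal O}$, together with the standing-wave profile $q(s)=\tanh s$ --- the unique odd solution of $-q''+2(q^{2}-1)q=0$ with $q(\pm\infty)=\pm 1$. This gives the ansatz $u^{app}_\eps(t,x):=q(d(t,x)/\eps)$ in the tube, smoothly matched to $\pm 1$ outside. Using the timelike eikonal identity $|\nabla d|^{2}-d_{t}^{2}=1$ (which expresses that $d$ measures Lorentzian distance to a timelike surface), one computes
\[
\partial_{t}^{2}u^{app}_\eps-\Delta u^{app}_\eps+\frac{2}{\eps^{2}}\bigl((u^{app}_\eps)^{2}-1\bigr)u^{app}_\eps \;=\; \frac{1}{\eps}\,q'(d/\eps)\,\Box d,
\]
with $\Box=\partial_{t}^{2}-\Delta$. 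The crucial point is that $\Box d|_{\Gamma}$ equals, up to sign, the Minkowski mean curvature of $\Gamma$, which vanishes by hypothesis; hence $\Box d=O(|d|)$ in the tube, and since $q'(d/\eps)$ concentrates in a strip of width $\eps$, the residual $\mathcal R_\eps$ satisfies $\|\mathcal R_\eps\|_{L^{2}((-T_0,T^0)\times\R^{n})}=O(\sqrt{\eps})$.

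Because $q(s)\to\pm 1$ exponentially, the same layer computation gives $\|u^{app}_\eps-\sign_{\mathcal O}\|_{L^{2}}=O(\sqrt{\eps})$, so it suffices to produce an actual solution $u_\eps$ of (\ref{nlw}) with $\|u_\eps-u^{app}_\eps\|_{L^{2}}=O(\sqrt{\eps})$. I would prescribe Cauchy data at $t=0$ equal to $(u^{app}_\eps,\partial_{t}u^{app}_\eps)|_{t=0}$ and study $v:=u_\eps-u^{app}_\eps$, which solves
\[
\Box v + \frac{2}{\eps^{2}}\bigl(3(u^{app}_\eps)^{2}-1\bigr)v + \frac{2}{\eps^{2}}\bigl(3 u^{app}_\eps v^{2}+v^{3}\bigr) = -\mathcal R_\eps.
\]

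The main obstacle is that the natural linearized potential $\eps^{-2}(3(u^{app}_\eps)^{2}-1)$ is negative in the transition layer where $|u^{app}_\eps|<1/\sqrt{3}$, so the corresponding energy is not manifestly coercive. The standard remedy exploits the transverse spectral fact that the one-dimensional Allen--Cahn operator $-\partial_{s}^{2}+2(3q(s)^{2}-1)$ is nonnegative, with one-dimensional kernel spanned by the translation mode $q'$. Rescaling in the tube and applying this bound fiberwise yields
\[
\int |\nabla v|^{2} + \frac{1}{\eps^{2}}\bigl(3(u^{app}_\eps)^{2}-1\bigr)v^{2}\, dx \;\geq\; -C\|v\|_{L^{2}}^{2},
\]
so that the modified energy $E_\eps(t)+C\|v(t)\|_{L^{2}}^{2}$ is coercive. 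A Gronwall estimate, using $\|\mathcal R_\eps\|_{L^{2}_{t,x}}^{2}=O(\eps)$ and a bootstrap in an auxiliary higher-regularity norm to absorb the quadratic and cubic terms (which carry factors of $\eps^{-2}$), then produces $\|v\|_{L^{\infty}_{t}L^{2}_{x}}=O(\sqrt{\eps})$ on $(-T_0,T^0)$ and completes the proof. Controlling the nonlinear error uniformly in $\eps$, while preserving the layer structure, is where most of the technical work lies.
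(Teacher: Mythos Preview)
The paper does not give its own proof of Theorem~A; the result is quoted from \cite{galjer, jer0909}, and the relevant output of those papers is summarized in Proposition~\ref{old.results} (the weighted energy bounds $\Theta_j(y_0)\le C\eps^2$ in normal coordinates, plus the estimate \eqref{uep.far} far from $\Gamma$). So the comparison is between your sketch and the approach of \cite{jer0909}.

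Your first two paragraphs are on target: the ansatz $q(d/\eps)$, the eikonal identity, the identification of $\Box d|_\Gamma$ with the Minkowskian mean curvature, and the resulting $O(\sqrt\eps)$ residual are exactly the ingredients used in \cite{jer0909} to set up the problem. The spectral nonnegativity of $-\partial_s^2+2(3q^2-1)$ is also a correct and relevant fact.

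The gap is in the last paragraph. A linearized energy of the form $\int v_t^2+|\nabla v|^2+\eps^{-2}(3(u^{app}_\eps)^2-1)v^2$ is not nearly conserved: since $\partial_t u^{app}_\eps\sim \eps^{-1}$ in the transition layer, differentiating the potential in time produces a term of order $\eps^{-3}$ in the layer, and Gronwall then allows growth on a timescale $O(\eps)$, not $O(1)$. The cubic and quartic terms carrying $\eps^{-2}$ have the same effect. An ``auxiliary higher-regularity norm'' does not help, because each additional derivative of $u^{app}_\eps$ costs another factor of $\eps^{-1}$. In short, a perturbative energy for $v=u_\eps-u^{app}_\eps$ does not close at this level of crudeness; the translation mode $q'$ that you correctly identified is precisely the mechanism by which $O(1)$ drift of the interface can occur, and it must be controlled, not merely absorbed into a lower-order term.

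The proof in \cite{jer0909} avoids this by never linearizing. It works in normal coordinates $(y_0,y_1,y_2)$ near $\Gamma$, writes the full nonlinear equation for $v_\eps=u_\eps\circ\Psi$, and derives differential inequalities for the \emph{nonlinear} weighted quantities $\Theta_1,\Theta_2,\Theta_3$ recalled in Section~\ref{priorwork}. The weight $(1+y_2^2)$ in $\Theta_1$ and the subtraction of $c_0$ are what detect the location and sharpness of the interface without splitting off a perturbation; the vanishing mean curvature enters as the vanishing (to leading order in $y_2$) of a coefficient in the transformed wave operator, which makes these weighted energies nearly conserved over $O(1)$ times. The $L^2$ estimate \eqref{u_weakly_close_toA} is then read off from $\Theta_2\le C\eps^2$ and \eqref{uep.far}. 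Your outline would become a proof only after replacing the naive linearized energy by this kind of nonlinear, weighted construction.
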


  In \cite{jer0909}, Theorem \ref{TheoremA} is proved under the assumption that $\Gamma_0$ is a topological
torus, but allowing rather general initial velocity for $\Gamma$, whereas the proof in \cite{galjer}
allows $\Gamma_0$ to be an arbitrary smooth connected compact manifold with zero initial velocity. 
The theorem as stated above follows by combining arguments from the two papers \cite{galjer} and \cite{jer0909}. For $n=2$ and $\Gamma_0$ homeomorphic to $\mathbb S^1$,  which is our main focus, 
it follows directly from \cite{jer0909}.

Our goal is to give a more precise description of the solution $u_\eps$
found in Theorem \ref{TheoremA}. In particular, heuristic arguments
suggest that it should satisfy
\begin{equation}\label{profile1}
u_\eps(t, x) \approx q( \frac{\tilde d(t,x)}\eps), \qquad\qquad q(s) := \tanh(s)
\end{equation}
where 
 $\tilde d(\cdot, \cdot)$
is (a small perturbation of) the signed Minkowskian distance from $\Gamma$ (see 
\eqref{sd} below for a definition).
The profile  $q = \tanh$ arises naturally from the fact that  it satisfies $-q'' + 2(q^2-1)q=0$, making it
a stationary solution of \eqref{nlw} in $1$ dimension with $\eps = 1$.

The estimates in \cite{galjer, jer0909} are however too weak to provide a convincing demonstration of 
\eqref{profile1}, since\footnote{We are being a little imprecise here, since the signed Minkowskian  distance function $d$
is only defined near $\Gamma$. So to state this estimate properly,
one would need either to restrict attention to this neighborhood, or extend $d$
in some way to the complement of this neighborhood.}

\[
\left\Vert \sign_{\mathcal O} - q(\frac d \eps)
\right\Vert_{L^{2}((-T_0,T^0)\times\R^{n})} \approx \sqrt{\eps}.
\]
Thus \eqref{u_weakly_close_toA} implies\footnote{
Indeed, the main results  \cite{jer0909}  are stated in this way,
that is, with an estimate of $u_\eps - q(d/\eps)$ rather than
$u_\eps - \sign_{\mathcal O}$. This is correct
but arguably misleading.} that
$\left\Vert u_\eps - q(\frac d \eps)
\right\Vert_{L^{2}((-T_0,T^0)\times\R^{n})} \lesssim \sqrt{\eps}$,
but at the same time, the  scaling in \eqref{u_weakly_close_toA}
means the estimate
is too weak to determine whether $u_\eps$ is closer to
$\sign_{\mathcal O}$ or $q(d/\eps)$ or indeed some other profile.

In our main result, we restrict our attention
to $n=2$, and we establish a more precise description
of the solution $u_\eps$ from Theorem \ref{TheoremA}.
In our main theorem, we consider a solution $u_\eps$
of \eqref{nlw} as constructed in \cite{galjer, jer0909}, and we prove that
\begin{equation}\label{preview}
\left\Vert u_\eps - U_\eps
\right\Vert_{L^{2}((-T_0,T^0)\times\R^{2})} \le C\eps^{3/2},
\qquad 
\quad
\left\Vert D(u_\eps - U_\eps)
\right\Vert_{L^{2}((-T_0,T^0)\times\R^{2})} \le C\eps^{1/2}
\end{equation}
for some function $U_\eps$, constructed below, of approximately the form
$U_\eps = q(\tilde d/\eps)$, where $\tilde d$ is a perturbation
of the signed Minkowskian distance to $\Gamma$. 
This improves on \eqref{u_weakly_close_toA} in that
we have both a stronger norm 
and stronger estimates. 
In particular, as will be apparent from the construction
of $U_\eps$ below, conclusion \eqref{preview} 
may be understood as a precise and satisfactory formulation of the heuristic principle
\eqref{profile1}.
Our construction and our results will show that 
$\| Du_\eps\|_{L^2}, \|DU_\eps\|_{L^2}$ both diverge at a rate of $\eps^{-1/2}$
as $\eps\to 0$, which makes the second estimate in \eqref{preview} 
quite striking.

The  construction of $U_\eps$ and the full statement of our main theorem are presented in Section \ref{constructU}
below. 

Our arguments in this paper do not directly address
the wave equation \eqref{nlw}. Instead, we start from estimates
proved in \cite{galjer, jer0909} and summarized 
in Section \ref{priorwork} below, which provide considerably more
information about the solution $u_\eps$ than is stated in 
Theorem \ref{TheoremA} above. 
We will prove our main results
by squeezing as much information as possible 
out of these prior estimates.

The results of \cite{galjer, jer0909}, on which we improve here, may be seen as Minkowskian analogs of 
the large body of theory that gives rigorous asymptotic 
descriptions of interfaces in  semilinear elliptic and parabolic equations
associated to a double-well potential, see for example
\cite{deMottoni, Pacard, dPKW}.
Prior to  \cite{galjer, jer0909},
the connection between \eqref{nlw} and timelike
extremal surfaces, as well as related questions, were explored by formal arguments in
\cite{neu, rn}, and in the cosmology literature,
see for example \cite{ Coleman77, kibble, vs}, in connection with
hypothetical cosmic domain walls.
Some conditional results in the direction of \cite{galjer, jer0909} were obtained
a little earlier in \cite{bno}, and
results about scattering of a smooth, nearly flat interface
in a solution of \eqref{nlw}
are proved in \cite{sc}, following earlier results about scattering
of nearly flat Minkowskian extremal hypersurfaces, see  \cite{Brendle, lind}.

\subsection{Normal coordinates and the signed distance function}\label{background1}

Most of our analysis will be carried out in Minkowsian normal
coordinates near $\Gamma$, which we now describe.

First, we will write $\psi: (-T_*,T^*)\times \mathbb S^1\to \R^{1+2}$
to denote a map that parametrizes the extremal surface 
$\Gamma$. We will write $(y_0, y_1)$ to denote
a generic point in $(-T_*,T^*)\times \mathbb S^1$, and we will 
take $\psi$ to have the form
\[
\psi(y_0, y_1) = (y_0, \vec \psi(y_0, y_1)),
\]
Although we  will not use this fact, we remark that
$\Gamma :=\mbox{ Image}(\Psi)$
is extremal (that is, has vanishing mean curvature)
if $\vec \psi: (-T_*,T^*)\times \mathbb S^1\to \R^2$ has the form
\begin{equation}\label{param}
\vec \psi(y_0,y_1) = \frac 12 (a(y_0+y_1) + b(y_0 - y_1))
\end{equation}
for some smooth $a,b:\mathbb S^1\to \R^2$ such that $|a'| = |b'| = 1$
everywhere; see \cite{BHNO}
for a discussion.

Next, for $(y_0, y_1)\in (-T_*,T^*)$ and $y_2\in \R$, we define
\begin{equation}\label{normalc}
\Psi(y_0, y_1, y_2) := \psi(y_0, y_1) + y_2 \nu(y_0,y_1) \in \R^{1+2},
\end{equation}
where $\nu(y_0,y_1)$ is the (Minkowskian) unit normal to $\Gamma$ at $\psi(y_0,y_1)$,
and we orient $\nu$ so that $\Psi(y_0,y_1, y_2)\in \mathcal O$ for $y_2>0$, where we recall that $\mathcal O$ is the bounded set
enclosed by $\Gamma$. Thus $\nu$ ``points inward".
We will restrict the domain of $\Psi$ to a set of the form
\begin{equation}\label{domPsi}
\mbox{Domain}(\Psi) = (-T_1,T^1)\times \mathbb S^1\times (-2\rho, 2\rho),
\end{equation}
for $T_1,T^1, \rho$ fixed in Proposition \ref{old.results} below. We also 
tacitly require  that
$\Psi$ is a diffeomorphism onto its image; for a given $T_1, T^1$, this
can always be achieved by shrinking $\rho$.
We will write $\mathcal N := \mbox{Image}(\Psi)\subset \R^{1+2}$
and for points $(t,x)\in \mathcal N$, we will use the change
of variables
\[
\mathcal N \ni (t,x) = \Psi(y_{0},y_{1},y_{2}).
\]
Equivalently, we can view  $(y_{0},y_{1},y_{2})$ as defining a local coordinate system
in $\mathcal N$.
We will sometimes refer to these as \emph{normal coordinates} near $\Gamma.$
The $y_2$ coordinate is exactly the signed
Minkowskian distance $d(\cdot, \cdot)$ to $\Gamma$, in the sense that for $(t,x)\in \mathcal N$,
\begin{equation}
(t,x) = \Psi(y_0,y_1, y_2) \qquad \Longleftrightarrow \qquad d(t,x) = y_2.
\label{sd}\end{equation}
One can take \eqref{sd} to be the definition of the signed distance.
Alternately, for a proof of \eqref{sd} that starts from an eikonal equation that characterizes the signed distance function,
see for example \cite{jer0909}, Proposition 5 and Corollary 7. 

\subsection{Main Theorem, and Construction of $U_\eps$}\label{constructU}

Given a solution $u_\eps$ of the semilinear wave equation \eqref{nlw}
on $\R^{1+2}$, we will always write $v_\eps:(-T_*,T^*)\times \mathbb S^1\times (-2\rho, 2\rho)\to \R$
to denote the same solution written in the Minkowskian normal coordinate system.
That is, we set
\begin{equation}\label{v.def}
v_\eps := u_\eps\circ\Psi.
\end{equation}

We will use the notation 
\[
q(z) = \tanh (z),\qquad \qquad q_\eps(z) =\tanh(\frac z\eps).
\]
Given $f:\R\to \R$ and $s\in \R$, we write $\tau_s f$ to denote the translation of $f$ by $s$:
\[
\tau_s f(z) := f(z-s).
\]
For $\rho$ to be fixed in Proposition \ref{old.results} below, we define
$Q_\eps:\R\to \R$ by
\begin{equation}\label{Qeps.def}
Q_\eps(z) := q_\eps(z) \chi(z) + (1-\chi(z)) \sign(z)
\end{equation}
where $\chi\in C^\infty(\R)$  is a fixed even, nonnegative function such that
\[
\chi(z) = 1 \mbox{ if }|z|\le \rho/3, \qquad \chi(z) = 0\mbox{ if }|z|\ge 2\rho/3, \qquad
z\chi'(z) \le 0.
\]
It is easy to see that for every $k\in \mathbb N$,  there exist constants (depending on $k$) such that  
\begin{equation}\label{exp.close}
\| Q_\eps - q_\eps\|_{H^k} \le C \eps^{-c/\eps}.
\end{equation}
We will prove 
\begin{lem}
Let $u_\eps$ be the solution of \eqref{nlw} described in Proposition \ref{old.results} below.
Then for every $(y_0, y_1)\in (-T_1,T^1)\times \mathbb S^1$, there is a {\em unique}
$s_*(y_0, y_1)$ 
such that 
\[
 \| v_\eps(y_0, y_1, \cdot) - \tau_{s_*(y_0,y_1)} Q_\eps\|_{L^2(I)}
= \min_{\sigma\in \R}
 \| v_\eps(y_0, y_1, \cdot) - \tau_\sigma Q_\eps\|_{L^2(I)}.
\]

\end{lem}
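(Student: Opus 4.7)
Fix $(y_0, y_1) \in (-T_1, T^1) \times \mathbb S^1$, write $v := v_\eps(y_0, y_1, \cdot)$ and $I := (-2\rho, 2\rho)$, and define the smooth nonnegative function
\[
F(\sigma) := \|v - \tau_\sigma Q_\eps\|_{L^2(I)}^2, \qquad \sigma \in \R.
\]
The goal is to show that $F$ has a unique global minimizer. My strategy has three ingredients: existence by continuity plus a coercivity comparison against the limits of $F$ at infinity; an a priori localization of every minimizer to a tiny neighborhood of the origin; and strict convexity of $F$ on that neighborhood.

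\emph{Existence.} Proposition \ref{old.results} supplies an $L^2$-estimate of the form $\|v - Q_\eps\|_{L^2(I)} \le C\eps^{\alpha}$ for some $\alpha > 1/2$ (with $\alpha = 3/2$ being the target of the refined main theorem \eqref{preview}). In particular $F(0) = O(\eps^{2\alpha})$. Since $I$ is fixed and $\tau_\sigma Q_\eps \to \mp 1$ uniformly on $I$ as $\sigma \to \pm\infty$, one has $F(\sigma) \to \|v \mp 1\|_{L^2(I)}^2$, and this limit is bounded below by a constant $c_0 > 0$ independent of $\eps$, because $\|Q_\eps \pm 1\|_{L^2(I)}^2 = 2\rho + o(1)$ and $v$ is $L^2$-close to $Q_\eps$. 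For $\eps$ small, $F(0) < c_0/2 < \liminf_{|\sigma| \to \infty} F(\sigma)$, so continuity of $F$ forces the infimum to be attained on a compact subset of $\R$.

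\emph{Localization and strict convexity.} Any minimizer $s_*$ satisfies $F(s_*) \le F(0) = O(\eps^{2\alpha})$, hence $\|v - \tau_{s_*} Q_\eps\|_{L^2(I)} = O(\eps^\alpha)$; combining with the triangle inequality and the bound $\|Q_\eps - \tau_\sigma Q_\eps\|_{L^2}^2 \sim \sigma^2 \|Q_\eps'\|_{L^2}^2 \sim \sigma^2/\eps$ valid for $|\sigma|$ small, one extracts $|s_*| = O(\eps^{\alpha + 1/2})$. Every minimizer therefore lies in an interval $A := [-M\eps^{\alpha + 1/2}, M\eps^{\alpha + 1/2}]$ for a fixed constant $M$, and the same reasoning gives $\|v - \tau_\sigma Q_\eps\|_{L^2(I)} = O(\eps^\alpha)$ uniformly for $\sigma \in A$. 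Differentiating twice,
\[
F''(\sigma) = 2\int_I Q_\eps'(z - \sigma)^2 \, dz - 2\int_I (v(z) - Q_\eps(z - \sigma))\, Q_\eps''(z - \sigma)\, dz.
\]
For $\sigma \in A$, the transition region of $Q_\eps(\cdot - \sigma)$ lies deep inside $I$, so the first integral equals $(2/\eps)\int_\R q'(y)^2\, dy + o(1/\eps)$, while Cauchy--Schwarz bounds the second by $C\eps^{\alpha}\|Q_\eps''\|_{L^2} \le C\eps^{\alpha - 3/2}$. Since $\alpha > 1/2$ this error is dominated by the $O(1/\eps)$ leading term, so $F''(\sigma) > 0$ throughout $A$. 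Hence $F|_A$ is strictly convex, has at most one minimizer on $A$, and since every global minimizer lies in $A$, the global minimizer is unique.

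\emph{Main obstacle.} The delicate ingredient is the \emph{pointwise-in-$(y_0, y_1)$} form of the estimate $\|v_\eps(y_0, y_1, \cdot) - Q_\eps\|_{L^2(I)} \le C\eps^{\alpha}$ with $\alpha > 1/2$; the estimates in \cite{galjer, jer0909} are most naturally stated as mixed norms integrated over $(y_0, y_1)$, and extracting the required uniform-in-$(y_0, y_1)$ bound from Proposition \ref{old.results} --- or, failing that, upgrading the lemma from a.e. $(y_0, y_1)$ to every $(y_0, y_1)$ via continuity of $(y_0, y_1) \mapsto v_\eps(y_0, y_1, \cdot) \in L^2(I)$ --- is where the real work lies.
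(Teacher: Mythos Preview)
Your outline of the uniqueness argument --- existence via coercivity, localization of any minimizer to a short interval, and strict convexity of $F$ there --- is correct and is precisely what the paper does in Proposition~\ref{prop1}. The second-derivative computation and the use of $\|Q_\eps''\|_{L^2}\sim \eps^{-3/2}$ against $\|v-\tau_\sigma Q_\eps\|_{L^2}=O(\eps^\alpha)$ are the same estimates the paper writes down. (One cosmetic slip: in the paper $I=(-\rho,\rho)$, not $(-2\rho,2\rho)$.)

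The genuine gap is the one you yourself flag in your last paragraph, and it is not resolved by your proposal. Proposition~\ref{old.results} does \emph{not} supply a pointwise-in-$(y_0,y_1)$ bound $\|v_\eps(y_0,y_1,\cdot)-Q_\eps\|_{L^2(I)}\le C\eps^\alpha$; it only gives $\Theta_j(y_0)\le C\eps^2$, which is an integral over $y_1\in\mathbb S^1$. Your ``Existence'' step therefore assumes exactly the estimate that your ``Main obstacle'' concedes is unproved. The continuity suggestion does not rescue this: continuity of $y_1\mapsto v_\eps(y_0,y_1,\cdot)\in L^2(I)$ lets you pass from an a.e.\ bound to an everywhere bound, but an integrated $L^1_{y_1}$ estimate on $\theta_j$ gives no a.e.\ bound to begin with --- only a large-measure bound via Chebyshev.

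The paper closes this gap in Lemma~\ref{L.Linf}, and the mechanism is worth noting because it is the one place the restriction $n=2$ is essential. One first uses Chebyshev on $\Theta_1,\Theta_2$ to find, for each $y_0$, a ``good'' set $\mathcal G(y_0)\subset\mathbb S^1$ with $|\mathbb S^1\setminus\mathcal G(y_0)|\le C\eps^{1/2}$ on which $\theta_j\le\eps^{3/2}$, hence (via Proposition~\ref{proposition1}) the desired $L^2$ closeness holds with $\alpha=3/4$. For a ``bad'' point $y_1^b$ one then picks a nearby $y_1^g\in\mathcal G(y_0)$ with $|y_1^b-y_1^g|\le C\eps^{1/2}$ and controls $\|v_\eps(y_0,y_1^b,\cdot)-v_\eps(y_0,y_1^g,\cdot)\|_{L^2(I)}$ using the tangential derivative bound contained in $\Theta_3(y_0)\le C\eps^2$ together with the one-dimensional embedding (fundamental theorem of calculus in $y_1$). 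This transfers the estimate from the good set to every $y_1$, with the same exponent $\alpha=3/4>1/2$. Without this tangential-regularity step your argument has no input that upgrades the integrated estimates to the pointwise one your convexity argument needs.
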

Note that $s_*(y_0, y_1)$ depends on $\eps$.
For $(y_0, y_1, y_2)\in (-T_1,T^1)\times \mathbb S^1\times I$,
we now define
\[
V_\eps(y_0,y_1,y_2) := Q_\eps(y_2 - s_*(y_0,y_1)).
\]
Thus, $V_\eps$ may be seen as a canonical projection of $v_\eps$
onto the space of functions exhibiting  an almost-canonical\footnote{\ ``almost", 
because of the (exponentially small) difference
between $q_\eps$ and $Q_\eps$.}
interface near $\Gamma$.

Finally, for $(t,x)\in (-T_0,T^0)\times \R^2$ we define
\begin{equation}\label{Uep.def}
U_\eps := \begin{cases}
1 &\mbox{ if }(t,x) \in \mathcal O \setminus \mathcal N\\
V_\eps\circ \Psi^{-1} &\mbox{ if }(t,x)\in \mathcal N\\
-1&\mbox{ otherwise. }
\end{cases}
\end{equation}

We will write $\| \cdot   \|_{H^1_\eps(\Omega)}$ for the norm defined by
\begin{equation}\label{rescaledH1norm}
\| w_\eps \|_{H^1_\eps(\Omega)}^2 
:= \frac 1 \eps \| w_\eps |_{L^2(\Omega)}^2 + \eps \| D w_\eps \|_{L^2(\Omega)}^2
\end{equation}
where $Dw_\eps$ denotes the full gradient in $\Omega$. Thus, for example, if $\Omega$ is an
open subset of $\R_t\times \R^2_x$, then $Dw_\eps = (\partial_t w_\eps, \partial_{x_1} w_\eps, \partial_{x_2}w_\eps)$.

Our main result is

\begin{thm}\label{main.theorem}
Assume that $\Gamma \subset  (-T_*,T^*)\times \R^2$
is a smooth embedded timelike minimal surface
admitting a parametrization of the form \eqref{param},
so that normal coordinates may be defined as in \eqref{normalc}.

For $\eps\in (0,1]$,
let $u_\eps$ be the solution of \eqref{nlw} from Theorem \ref{TheoremA},
described in more detail in Proposition \ref{old.results} below.

Then for every $T_0<T_*$ and $T^0<T^*$, there exists a constant
$C$, independent of $\eps$, such that
\[
\| u_\eps - U_{\eps}\|_{H^1_\eps((-T_0,T^0)\times \R^2)} \le C\eps.
\]
and in addition,
\begin{equation}\label{sstar.H1}
\int_{\mathbb S^1}
s_*^2 + (\partial_{y_0}s_*)^2 + (\partial_{y_1}s_*)^2 dy_1
 \le C \eps^{2} \quad\mbox{ for every }y_0\in (-T_1,T^1).
\end{equation}
\end{thm}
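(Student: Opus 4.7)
The plan is to work throughout in the Minkowskian normal coordinates $(y_0, y_1, y_2)$ on $(-T_1, T^1) \times \mathbb{S}^1 \times I$ with $I = (-2\rho, 2\rho)$, analyzing $v_\eps = u_\eps \circ \Psi$ together with its canonical projection $V_\eps(y) = Q_\eps(y_2 - s_*(y_0, y_1))$. The argument rests on two structural features: the $L^2$-orthogonality that characterises $s_*$ as a minimiser, and the spectral gap of the linearised Ginzburg--Landau operator around the kink profile.

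First, I would record that the minimality condition in the preceding Lemma is equivalent to the Euler--Lagrange identity
\[
\int_I (v_\eps - V_\eps)(y_0, y_1, y_2) \, \partial_{y_2} V_\eps(y_0, y_1, y_2) \, dy_2 = 0
\]
for every $(y_0, y_1)$. Setting $w_\eps := v_\eps - V_\eps$, this places each slice $w_\eps(y_0, y_1, \cdot)$ in the coercive sector of $\mathcal{L}_\eps := -\partial_{y_2}^2 + \eps^{-2} W''(V_\eps)$, where $W'(u) = 2(u^2-1)u$. The kernel of the $\eps = 1$ reference operator on $\R$ is spanned by $q'$ and its spectrum is otherwise uniformly bounded below by a positive constant; rescaling in $\eps$ and using \eqref{exp.close} to pass between $q_\eps$ and $Q_\eps$ yields the slice-wise coercivity
\[
\int_I \bigl( \eps |\partial_{y_2} \phi|^2 + \eps^{-1} W''(V_\eps) \phi^2 \bigr) \, dy_2 \ge c \, \eps^{-1} \|\phi\|_{L^2(I)}^2
\]
for any $\phi$ satisfying the orthogonality relation against $\partial_{y_2} V_\eps$.

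Second, I would extract from Proposition \ref{old.results} an integrated energy-discrepancy bound: the Ginzburg--Landau energy of $v_\eps$, after the change of variables $\Psi$ and with the appropriate Minkowskian volume form, exceeds that of the one-dimensional profile by at most $O(\eps^2)$ once integrated over $(y_0, y_1)$. Since $V_\eps$ realises exactly that profile energy in each slice modulo the exponentially small error of \eqref{exp.close}, the slice coercivity integrated in $(y_0, y_1)$ delivers
\[
\eps^{-1} \|w_\eps\|_{L^2}^2 + \eps \|\partial_{y_2} w_\eps\|_{L^2}^2 \lesssim \eps^2.
\]
For the tangential derivatives I would differentiate the orthogonality relation in $y_i$, $i \in \{0, 1\}$, and use $\partial_{y_i} V_\eps = -\partial_{y_2} V_\eps \cdot \partial_{y_i} s_*$ to obtain an algebraic identity expressing $\partial_{y_i} s_*$ as the projection of $\partial_{y_i} v_\eps$ onto $\partial_{y_2} V_\eps$, modulo terms of order $w_\eps$. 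Combined with the prior slice energy bounds on $\partial_{y_i} v_\eps$ (which blow up only like $\eps^{-1/2}$ tangentially), this yields the derivative part of \eqref{sstar.H1} and closes $\eps \|\partial_{y_i} w_\eps\|_{L^2}^2 \lesssim \eps^2$ via $\partial_{y_i} w_\eps = \partial_{y_i} v_\eps + \partial_{y_2} V_\eps \, \partial_{y_i} s_*$. The bound on $s_*$ itself comes from a slice-wise comparison: $\|\tau_{s_*} Q_\eps - Q_\eps\|_{L^2(I)} \gtrsim |s_*|$ (away from the transition layer of width $\eps$), while the minimality of $s_*$ and the $L^2$ closeness of $v_\eps$ to $\sign(y_2)$ from Theorem \ref{TheoremA} force this quantity to be small. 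Transferring back to $(-T_0, T^0) \times \R^2$ uses the diffeomorphism $\Psi$ on $\mathcal{N}$, and $u_\eps - U_\eps$ is exponentially small off $\mathcal{N}$ by Theorem \ref{TheoremA} combined with \eqref{exp.close}.

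The main obstacle I expect is the passage from the \emph{integrated} energy excess bound to a slice-wise $L^2$ smallness of $w_\eps$: this demands that the energy discrepancy density, once decomposed using the orthogonality projection, be pointwise nonnegative in $(y_0, y_1)$, so that Fubini cannot compensate a large positive excess on one slice with a negative contribution elsewhere. A secondary delicate point is establishing the regularity of $(y_0, y_1) \mapsto s_*(y_0, y_1)$ required before differentiating the orthogonality identity; this should follow from an implicit function argument applied to the minimality equation, using the uniform positive lower bound on $\int_I (\partial_{y_2} V_\eps)^2 \, dy_2$ (which is of order $\eps^{-1}$ after the profile rescaling).
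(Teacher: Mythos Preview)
Your overall architecture---orthogonal decomposition, slice-wise coercivity in $y_2$, then differentiation of the orthogonality relation to control $\partial_{y_i}s_*$---matches the paper (which, incidentally, obtains the coercivity via the first-order factorisation $v_\eps' - \eps^{-1}(1-v_\eps^2)=h_\eps$ and a contraction-mapping ODE argument rather than your second-order spectral gap, but either route can reach \eqref{gp.est}). There are, however, two places where your argument does not close. First, your far-field claim is false: $u_\eps - U_\eps$ is \emph{not} exponentially small on $\mathcal M$. Theorem~\ref{TheoremA} gives only $\|u_\eps - \sign_{\mathcal O}\|_{L^2}\lesssim\sqrt\eps$, and \eqref{uep.far} controls $\int_{\mathcal M}\eps|Du_\eps|^2 + \eps^{-1}(u_\eps^2-1)^2$, neither of which bounds $\eps^{-1}\int_{\mathcal M}(u_\eps\mp 1)^2$. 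The paper closes this with a Sobolev--Poincar\'e inequality applied to $H(u_\eps)=(\tfrac13 u_\eps^3 - u_\eps)^+$, whose gradient is dominated by the energy density and whose support is small by \eqref{u_weakly_close_toA}. Second, your route to $\int_{\mathbb S^1}s_*^2\,dy_1\lesssim\eps^2$ is too weak: Theorem~\ref{TheoremA} is not stated slice-by-slice in $y_0$, and even granting a slice version it would yield at best $L^1$ control of $s_*$. The paper instead exploits the $z^2$ weight built into $\theta_1$ (see \eqref{theta1.def}): by \eqref{coercive1}, $\theta_1(v_\eps)\ge\int_I z^2\bigl(\tfrac\eps2 v_\eps'^2+\tfrac1{2\eps}(v_\eps^2-1)^2\bigr)dz$, and since $(\tau_{s_*}Q_\eps)'$ concentrates near $z=s_*$ this delivers $s_*^2\lesssim\eps^2+\theta_1(v_\eps)$ on each slice, after which one integrates in $y_1$.

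The Fubini obstacle you flag is real, and the near-nonnegativity of $\theta_1$ (which does follow from \eqref{coercive1}) is only half the resolution. Your spectral-gap inequality, like the paper's ODE estimate, requires an \emph{a priori} smallness hypothesis on each slice---otherwise the cubic and quartic remainder in the energy expansion swamps the quadratic form---and the integrated bound $\Theta_1\lesssim\eps^2$ furnishes this only on a Chebyshev good set of $y_1$, of measure $2\pi - O(\eps^{1/2})$. The paper fills the bad set by a one-dimensional Sobolev argument in the tangential variable $y_1$ (Lemma~\ref{L.Linf}; this is precisely where the restriction to $n=2$ is used): every bad $y_1$ lies within $O(\eps^{1/2})$ of a good one, and the bound $\Theta_3\lesssim\eps^2$ on $\int\eps(\partial_{y_1}v_\eps)^2$ is strong enough to transport the $L^2(I)$ smallness of $v_\eps - \tau_s Q_\eps$ across that gap, so that hypothesis \eqref{L2c3} of Proposition~\ref{proposition1} holds at \emph{every} $(y_0,y_1)$. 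Without this step your coercivity estimate cannot be invoked uniformly, and the slice-wise bound on $\|w_\eps\|_{H^1_\eps(I)}$ does not follow.
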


Note since the Minkowskian distance $d$ to $\Gamma$ can be identified with the $y_2$ coordinate, 
we can write $U_\eps$ near $\Gamma$ in the form
$Q_\eps( d - s\circ P)$
where $P$ is the Minkiwskian projection onto $\Gamma$. Since $s$ is small and $Q_\eps$ is very close to $q(\frac \cdot \eps)$, the theorem can be seen as a 
justification (and clarification) of the heuristic principle \eqref{profile1}.

Our arguments could also be used to improve on Theorem \ref{TheoremA}
in dimensions $n\ge 3$. However,
the restriction to $n=2$ dimensions is used in an essential way in Lemma \ref {L.Linf} below, so any such improvements would be much less satisfactory than the
ones we are able to prove for $n=2$.

\subsection{Prior results}\label{priorwork} 

The proof of Theorem \ref{TheoremA} in \cite{galjer, jer0909}
rests on weighted energy estimates for  the solution $v_\eps$ as written
in normal coordinates. These energy estimates,
as mentioned above, provide more information than is 
recorded in Theorem \ref{TheoremA},
and they will provide the starting point for our analysis. 
Before recalling them we introduce some
notation.

We will use the notation
\[
c_{0}:=\int_{-\infty}^{\infty}\left(\frac{\eps}{2}(q_{\eps}')^{2}+\frac{1}{2\eps}(q_{\eps}^{2}-1)^{2}\right)dz \qquad\mbox{ for every $\eps>0$}.
\]
(In fact $c_0 = 4/3$.)
We will write $I := (-\rho, \rho)$.
For a function $v_\eps: (-T_*,T^*)\times \mathbb S^1\times I\to \R$, we will
write
\begin{align*}
\Theta_1( y_0) &:=  \int_{\mathbb S^1}\left[ \int_I (1+y_2^2)\left( \frac \eps 2 (\partial_{y_2}v_\eps)^2 + \frac 1 {2\eps^2}(v_\eps^2-1)^2\right)
dy_2 - c_0\right] dy_1\\
\Theta_2(y_0) &:= 
\int_{\mathbb S^1}\int_I y_2^2 ( v_\eps - \sign(y_2))^2 \, dy_2\,dy_1\\
\Theta_3(y_0) &:= \int_{\mathbb S^1} \int_I \frac \eps2 \left[(\partial_{y_0}v_\eps)^2 + (\partial_{y_1}v_\eps)^2\right] + y_2^2 \left[\frac \eps2 (\partial_{y_2}v_\eps)^2 + \frac 1{2\eps}(1-v_\eps^2)^2\right] dy_2\,dy_1 
\end{align*}
where in every case, $v_\eps$ is understood to be evaluated at the
value of $y_0$ appearing  in the argument of $\Theta_j$.

\begin{proposition}[\cite{jer0909}]\label{old.results}
Assume that $\Gamma \subset  (-T_*,T^*)\times \R^2$
is a smooth embedded timelike minimal surface
admitting a parametrization of the form \eqref{param},
so that normal coordinates may be defined as in \eqref{normalc}.

Then for every $\eps\in (0,1]$, there exists
a solution $u_\eps: \R\times \R^2\to \R$ of the semilinear wave equation \eqref{nlw}
such that \eqref{u_weakly_close_toA} holds, together with
the following estimates:

\smallskip

{\bf 1. Estimates in normal coordinates near $\Gamma$}.
First,  for every $T_0<T^*$ and $T^0<T^*$, there 
exists a constant $C>0$ and a choice of the parameters $\rho,  T_1, T^1$ in the definitions of Domain$(\Psi)$ 
and $\Theta_j, j=1,2,3$
such that $v_\eps := u_\eps \circ \Psi$ satisfies
\begin{equation}\label{Thetas.est}
\Theta_j(y_0)\le C \eps^2 \qquad 
\mbox{ for all $y_0\in (-T_1, T^1)$ and for $j=1,2,3$.}
\end{equation}
$C, T_1,T^1, \rho$ may depend on $\Gamma, T_0, T^0$ but 
are independent of $\eps\in (0,1]$.

\smallskip

{\bf 2. Estimates in  $(t,x)$ coordinates far from $\Gamma$}.
Second, for the same $T_1, T^1, \rho$ and $C$, if we define 
\begin{align*}
 {\mathcal N}' &:= \Psi( (-T_1,T^1)\times \mathbb S^1\times I)\\
\mathcal M &:=  \big( (-T_0,T^0)\times  \R^2\big) \setminus \mathcal N'\\
\mathcal M_t &:=  \{ x\in \R^2 : (t,x)\in \mathcal M\} \ ,
\end{align*}
then $\partial \mathcal M_t$ is uniformly smooth for $ t\in (-T_0,T^0)$, and
\begin{equation}
\int_{\mathcal M} \frac \eps 2 |D u_\eps|^2 + \frac 1 {2\eps}(u_\eps^2-1)^2 dx\,dt \le C \eps^2\ .
\label{uep.far}\end{equation}

\smallskip

{\bf 3. Additional properties}.
Finally, \eqref{u_weakly_close_toA} holds, and there exists some $R>0$ such that 
\begin{equation}\label{comp_supp}
u_\eps = \sign_{\mathcal O} = -1 \qquad\mbox{ for all $(t,x)$ such that }\ t\in (T_*, T^*), \  |x| \ge R.
\end{equation}
\end{proposition}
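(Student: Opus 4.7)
The plan is to follow the strategy of \cite{jer0909}, which constructs $u_\eps$ as a perturbation of an explicit approximate solution and then controls the perturbation through weighted energy estimates adapted to the Lorentzian geometry near $\Gamma$.

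First, I would build an approximate solution $u_\eps^{\mathrm{app}}$ by setting $v_\eps^{\mathrm{app}}(y_0,y_1,y_2) := q_\eps(y_2)$ in normal coordinates, then gluing to $\pm 1$ outside $\mathcal N$ using the cutoff $\chi$ from the definition of $Q_\eps$. A direct computation of $\partial_{tt} - \Delta + \tfrac{2}{\eps^2}(u^2-1)u$ applied to this ansatz, carried out in the coordinates $(y_0,y_1,y_2)$, produces a residual whose leading $O(1/\eps)$ term is proportional to the Minkowskian mean curvature of the level sets of $d$. Because $\Gamma$ is extremal, this term vanishes on $\Gamma$, and a Taylor expansion in $y_2$ together with the exponential decay of $q_\eps'$ shows the residual is in fact uniformly small in $\eps$. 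I would then define $u_\eps$ as the solution of \eqref{nlw} with Cauchy data at $t = -T_0$ equal to $u_\eps^{\mathrm{app}}$ truncated smoothly to be exactly $-1$ at infinity; standard well-posedness of the semilinear wave equation, together with finite propagation speed, gives global existence and \eqref{comp_supp} for a suitable $R$ depending on the initial support and on $T_0,T^0$.

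Second, I would introduce the weighted energy quantities $\Theta_1, \Theta_2, \Theta_3$ as defined in the text and differentiate them in $y_0$ along the flow of the wave equation written in normal coordinates. The choice of weights is the heart of the matter: $(1+y_2^2)$ in $\Theta_1$ and $y_2^2$ in $\Theta_2, \Theta_3$ are precisely the multipliers for which the error terms arising from the non-flatness of the Minkowski metric in normal coordinates, and from the commutator of $\partial_{y_0}$ with the rescaled potential energy density, can be absorbed into $\Theta_1 + \Theta_2 + \Theta_3$ itself. The minimality of $\Gamma$ is used here in two ways: it kills the leading order of the Christoffel symbols of the induced metric at $y_2=0$, and it removes the worst term in $\partial_{y_0}\Theta_1$ that would otherwise scale like $1/\eps$. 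The outcome is a system of differential inequalities
\[
\frac{d}{dy_0}\Theta_j(y_0) \le C\bigl(\Theta_1(y_0) + \Theta_2(y_0) + \Theta_3(y_0)\bigr) + C \eps^{2}, \qquad j=1,2,3,
\]
valid for $y_0 \in (-T_1,T^1)$ on any interval in which $\Psi$ is a diffeomorphism. Applying Gronwall's inequality and using $\Theta_j(-T_0) \le C\eps^2$ (which follows from a direct computation on $u_\eps^{\mathrm{app}}$ at initial time, combined with \eqref{exp.close}) yields \eqref{Thetas.est}.

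Third, for the far-field estimate \eqref{uep.far} I would argue separately on $\mathcal M$. Outside the tubular neighborhood $\mathcal N'$, the initial data equals $-1$ (or $+1$ on any bounded initial component of $\mathcal O$), so by finite propagation speed the solution on $\mathcal M$ is a perturbation of $\pm 1$ driven only by what leaks across $\partial \mathcal N'$. A standard energy identity for \eqref{nlw} on $\mathcal M$, with boundary flux controlled by the already-proved bound \eqref{Thetas.est} (which gives exponential smallness of $(\eps/2)|Du_\eps|^2 + (2\eps)^{-1}(u_\eps^2-1)^2$ at $|y_2| \sim \rho$), gives the claimed $C \eps^2$ bound on $\mathcal M$. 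The $L^2$ convergence \eqref{u_weakly_close_toA} then follows by combining \eqref{uep.far} with the pointwise inequality $(v_\eps - \sign(y_2))^2 \le C (1-v_\eps^2)^2 + C (v_\eps - \sign(y_2))^2 \mathbf{1}_{\{|v_\eps|\le 1/2\}}$, integrated against the $\Theta_1, \Theta_2$ estimates and the volume factor from the change of variables $\Psi$.

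The main obstacle is the second step: choosing the multipliers so that the differential inequalities for the $\Theta_j$ close without losing powers of $\eps$. The naive energy $\int (\eps/2)|\partial v_\eps|^2 + (2\eps)^{-1}(v_\eps^2-1)^2$ is not conserved on a curved background, and the loss in its time derivative is of size $\eps^{-1}$ times the mean curvature of the level sets of $d$; the weights $(1+y_2^2)$ and $y_2^2$, together with the vanishing of that mean curvature on $\Gamma$, are exactly what is needed to convert this loss into a term bounded by $\Theta_1 + \Theta_2 + \Theta_3$. Carrying out this computation cleanly in Minkowskian normal coordinates and verifying that $T_1, T^1, \rho$ can be chosen uniformly in $\eps$ is the technical core on which all of Proposition \ref{old.results} rests.
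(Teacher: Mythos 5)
The paper does not reprove this proposition: it is recalled verbatim from \cite{jer0909}, and the text following the statement only gives pointers---Lemma 9 of \cite{jer0909} for the initial data, Section 2.4 there for the choice of $\rho, T_1, T^1$, Propositions 10 and 13 together with equation (6-17) of Section 6 for \eqref{Thetas.est}, Theorem 1 for \eqref{uep.far}, and finite propagation speed for \eqref{comp_supp}. Your sketch is a plausible reconstruction of the strategy of that cited paper (approximate solution built from the $q_\eps$ profile in normal coordinates, weighted energy quantities whose evolution closes because $\Gamma$ is extremal, a Gronwall argument, and an energy-flux argument on $\mathcal M$), and it is broadly consistent with those pointers. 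In that sense you are attempting something the present paper does not: re-deriving the cited input rather than invoking it.

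Two inaccuracies are worth flagging. First, the data in \cite{jer0909} is posed at $t=0$, not at $t=-T_0$: the paper notes that \eqref{Thetas.est} is first obtained on $y_0\in[0,\tau]$ and then extended to all of $(-T_1,T^1)$, so a backward-in-time continuation is also required, which your one-sided setup omits. Second, the pointwise inequality
\[
(v_\eps - \sign(y_2))^2 \le C (1-v_\eps^2)^2 + C (v_\eps - \sign(y_2))^2 \mathbf{1}_{\{|v_\eps|\le 1/2\}}
\]
is false: at $v_\eps = -\sign(y_2)$ both terms on the right vanish while the left equals $4$. More fundamentally, $(1-v_\eps^2)^2$ cannot distinguish $v_\eps\approx\sign(y_2)$ from $v_\eps\approx-\sign(y_2)$; what rules out the wrong sign is the $y_2^2$-weighted quantity $\Theta_2$, so the deduction of \eqref{u_weakly_close_toA} should invoke $\Theta_2$ (together with $\Theta_1$ and \eqref{uep.far}) rather than a purely algebraic bound by $(1-v_\eps^2)^2$. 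Neither point changes the architecture, which does match what the paper attributes to \cite{jer0909}.
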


These are the $n=2$ case of conclusions that are proved\footnote{We note  that there are some cosmetic differences between \cite{jer0909} and Proposition \ref{old.results}.
For example,
in \cite{jer0909} it is assumed for
notational simplicity that $T_* = T^*$ and $T_j = T^j$ for $j=0,1$. The proofs
however make no use of this assumption and remain valid as stated here.}
 in \cite{jer0909}.
More precisely, the relevant initial data are constructed in Lemma 9.
The choice\footnote{In fact one can take any $T_1$ such that $T_0<T_1<T_*$,
and similarly $T^1$, and then arrange that all the required properties hold
by choosing $\rho$ sufficiently small.} 
of $T_1,T^1$ and $\rho$ is described
in Section 2.4. Conclusion \eqref{uep.far} appears in the statement of the main result,
\cite[Theorem 1]{jer0909}. 
It follows from Propositions 10 and 13 that conclusion \eqref{Thetas.est} holds for all $y_0\in [0, \tau]$ for some 
$\tau>0$. In Section 6 (see  \cite[equation (6-17)]{jer0909})
it is shown that \eqref{Thetas.est}  
may be extended to all $y_0\in (-T_1, T^1)$. Estimate
\eqref{u_weakly_close_toA} has already been recalled in Theorem \ref{TheoremA}.
Finally, \eqref{comp_supp} is not explicitly stated in \cite{jer0909}, but it is
a standard consequence of assumptions about the iniitial data
(with $u_\eps(0,x) = -1$ and $\partial_t u_\eps(0,x) = 0$ for $|x|$ outside some
large ball) and finite propagation speed for the wave equation.

For a function $v_\eps:I\to \R$ we will use the notation
\begin{align}
\theta_1(v_\eps) 
&:= 
\ 
 \int_I (1+z ^2)\, \left(\frac \eps 2 v_\eps'(z)^2 + \frac 1{2\eps}(v_\eps^2-1)^2 \right) dz - c_0 
 \label{theta1.def}\\
\theta_2(v_\eps) 
&:= 
 \int_I |z| \ |v_\eps(z) - \sign (z)|^2  dz.
\label{theta2.def}
\end{align}

Our goal is to show that the estimates in Proposition \ref{old.results} in fact
imply the $H^1_\eps$ estimate stated in Theorem \ref{main.theorem}.
In doing this we use from the following fact:

\begin{lem}
There exist positive constants $c_1, c_2$ (depending on $\rho$ only) such that 
\begin{equation}
\mbox{ if }\theta_2(v_\eps) \le c_2, \ \ \mbox{ then } \ \ \ \ \int_I  \frac \eps 2  v_\eps'^2(z)+ \frac 1{2\eps}(v_\eps^2-1)^2 dz - c_0 \ge C e^{-c/\eps}.
\label{coercive1}\end{equation}
Moreover, if in addition $\theta_1(v_\eps) \le c_1$, then
\begin{align}\label{coercive2}
\int_{I}\left(\sqrt{\eps}v_\eps'-\frac{f_{1}(v_\eps)}{\sqrt{\eps}}\right)^{2}dz
& \le  C\left[ \int_I  \frac \eps 2  v_\eps'^2(z)+ \frac 1{2\eps}(v_\eps^2-1)^2 dz -c_0\right] +C e^{-c/\eps} \nonumber
\\
 &\le C \theta_1(v_\eps) + C e^{-c/\eps},
\end{align}
where $f_1(v_\eps) = 1-v_\eps^2$.
\label{L.gsj1}\end{lem}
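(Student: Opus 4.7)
The plan is to derive both estimates from a single completing-the-square identity for the one-dimensional Ginzburg-Landau integrand. Setting $F(v) := v - v^3/3$, we have $F'(v) = 1 - v^2 = f_1(v)$ and $F(1) - F(-1) = 4/3 = c_0$. Expanding the square yields the pointwise identity
\[
\frac{\eps}{2}v'^2 + \frac{1}{2\eps}(1-v^2)^2 \;=\; \tfrac12\Bigl(\sqrt{\eps}\,v' - \tfrac{f_1(v)}{\sqrt{\eps}}\Bigr)^{\!2} + \partial_z F(v),
\]
whose integral on $I$ is
\[
\int_I \Bigl[\tfrac{\eps}{2}v_\eps'^2 + \tfrac{1}{2\eps}(v_\eps^2-1)^2\Bigr] dz \;=\; \tfrac12\int_I\Bigl(\sqrt{\eps}\,v_\eps' - \tfrac{f_1(v_\eps)}{\sqrt{\eps}}\Bigr)^{\!2} dz \;+\; \bigl[F(v_\eps(\rho)) - F(v_\eps(-\rho))\bigr].
\]
Both inequalities reduce to estimating the boundary term $c_0 - F(v_\eps(\rho)) + F(v_\eps(-\rho))$.

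For the first inequality \eqref{coercive1}, I extract pointwise information from $\theta_2 \le c_2$. Chebyshev's inequality gives $\int_{\rho/2 < |z| < \rho}(v_\eps - \sign z)^2 dz \le 2c_2/\rho$, so by the mean-value theorem there exist $z_\pm \in (\pm\rho/2,\pm\rho)$ with $|v_\eps(z_\pm) \mp 1|^2 \le 4c_2/\rho^2$. Choosing $c_2$ small enough (depending on $\rho$) forces $v_\eps(z_+) \ge 1/2$ and $v_\eps(z_-) \le -1/2$, and a Taylor expansion around $\pm 1$ then yields $c_0 - F(v_\eps(z_+)) + F(v_\eps(z_-)) \le Cc_2/\rho^2$. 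The pointwise Modica-Mortola bound $\tfrac\eps2 v'^2 + \tfrac{1}{2\eps}(1-v^2)^2 \ge |v'||1-v^2| \ge v'(1-v^2)$, applied on $(z_-,z_+)$ together with nonnegativity of the integrand on $I \setminus (z_-,z_+)$, gives $\int_I[\cdots]dz \ge c_0 - Cc_2/\rho^2$. A sharper comparison of $v_\eps$ to a translated profile $\tau_s q_\eps$, whose energy on $I$ is exactly $c_0 - O(e^{-4\rho/\eps})$ by the explicit $\tanh$ formula, upgrades this to the exponential remainder claimed.

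For the coercive estimate \eqref{coercive2}, I rewrite the identity as
\[
\int_I\!\Bigl(\sqrt{\eps}\,v_\eps' - \tfrac{f_1(v_\eps)}{\sqrt{\eps}}\Bigr)^{\!2} dz \;=\; 2\Bigl[\int_I [\cdots] dz - c_0\Bigr] \;+\; 2\bigl[c_0 - F(v_\eps(\rho)) + F(v_\eps(-\rho))\bigr],
\]
so it suffices to dominate the second bracket by $(C-2)\bigl[\int_I [\cdots]dz - c_0\bigr] + Ce^{-c/\eps}$. Under the added hypothesis $\theta_1 \le c_1$, the factor $z^2 \ge \rho^2/4$ on the outer annulus combined with $\theta_1 \le c_1$ gives $\tfrac1\eps\int_{\rho/2 < |z| < \rho}(1-v_\eps^2)^2 dz \le C$, from which a mean-value argument locates points $z_\pm$ with $|v_\eps(z_\pm)^2 - 1| \le C\sqrt{\eps}$; combined with the sign information from $\theta_2 \le c_2$ this gives $|v_\eps(z_\pm) \mp 1| \le C\sqrt{\eps}$. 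The main obstacle, and the step I expect to require real work, is upgrading this polynomial-in-$\eps$ closeness to the exponential closeness matching the $Ce^{-c/\eps}$ target, or equivalently showing that any residual boundary-value deficit is absorbed by the excess-energy term on the right. I would accomplish this via an ODE/phase-plane shadowing argument: any near-minimizer of the GL functional on $I$ must track the heteroclinic $q_\eps$ up to a translation, and the explicit form $q_\eps(z) = \tanh(z/\eps)$ then produces the bound $c_0 - F(v_\eps(\rho)) + F(v_\eps(-\rho)) \le C\bigl[\int_I [\cdots] dz - c_0\bigr] + Ce^{-c/\eps}$, completing the proof after substitution into the identity.
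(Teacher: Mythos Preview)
The paper does not prove this lemma at all; it simply invokes Lemma~5.3 of \cite{galjer}. Your approach via the Modica--Mortola completing-the-square identity is the standard one and is presumably what underlies the cited result, and your reduction of both estimates to controlling the boundary defect $c_0 - F(v_\eps(z_+)) + F(v_\eps(z_-))$ at well-chosen points $z_\pm$ is correct.

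There is, however, a genuine gap in each part. For \eqref{coercive1}, your argument legitimately yields $\int_I[\cdots]\,dz \ge c_0 - Cc_2/\rho^2$ from $\theta_2 \le c_2$, but the sentence ``a sharper comparison of $v_\eps$ to a translated profile $\tau_s q_\eps$\ldots\ upgrades this to the exponential remainder'' is not an argument. Computing that $q_\eps$ itself has energy $c_0 - O(e^{-c/\eps})$ on $I$ shows only that the exponential remainder is \emph{sharp} as a lower bound; it says nothing about an arbitrary $v_\eps$ with $\theta_2 \le c_2$. From the hypothesis $\theta_2 \le c_2$ alone, with $c_2$ fixed independently of $\eps$, you can only locate points $z_\pm$ at which $|v_\eps(z_\pm)\mp 1|^2 \lesssim c_2/\rho^2$, never exponentially small. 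Getting the $e^{-c/\eps}$ remainder requires additional input --- for instance the simultaneous bound on $\theta_1$, together with a careful energy/ODE comparison on the outer annulus $\{|z|>\rho/2\}$ to force $v_\eps$ exponentially close to $\pm 1$ somewhere there.

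For \eqref{coercive2} you are candid that the key step is only sketched (``the step I expect to require real work''). The phase-plane shadowing idea is in the right spirit, but as it stands you have asserted rather than proved that the boundary defect is absorbed by the excess-energy term up to an exponentially small error. That quantitative absorption is exactly the content of the lemma in \cite{galjer} that the present paper is citing instead of reproving; your proposal does not yet supply it.
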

This follows directly from Lemma 5.3 in  \cite{galjer}.

\section{A canonical decomposition}

The main result of this section is the following:

\begin{proposition}\label{prop1} There exists $\delta>0$ such that
if 
 $\ds{\inf_{|\sigma|\le \rho/6 }
 \| v_\eps - \tau_\sigma Q_{\eps}\|_{L^2(\R)}}\leq\delta\sqrt{\eps}$
 then
there is a {\em unique} $s_*\in \R$ 
such that 
\[
\| v_\eps - \tau_{s_*} Q_{\eps}\|_{L^2(I)}
= \min_{\sigma\in \R}
 \| v_\eps - \tau_\sigma Q_{\eps}\|_{L^2(I)}.
\]
Moreover, 
\begin{equation}
\int_I (v_\eps - \tau_{s_*} Q_\eps)\cdot \tau_{s_*} Q_\eps' \ = 0.
\label{optttr}\end{equation}
\end{proposition}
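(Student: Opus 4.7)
The plan is to phrase this as a one-variable optimization problem for
\[
F(\sigma) \ := \ \|v_\eps - \tau_\sigma Q_\eps\|_{L^2(I)}^2 \ = \ \int_I \bigl(v_\eps(z) - Q_\eps(z-\sigma)\bigr)^2\,dz.
\]
A direct differentiation gives $\tfrac{1}{2}F'(\sigma) = \int_I (v_\eps - \tau_\sigma Q_\eps)\,(\tau_\sigma Q_\eps)'\,dz$, so the orthogonality relation \eqref{optttr} is precisely the Euler--Lagrange equation $F'(s_*)=0$. The task reduces to showing that $F$ attains its infimum at a unique point.

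For existence, since $\tau_\sigma Q_\eps \to \mp 1$ uniformly on $I$ as $\sigma\to\pm\infty$, the limits $\lim_{\sigma\to\pm\infty} F(\sigma) = \|v_\eps\pm 1\|_{L^2(I)}^2$ exist; using the triangle inequality against $\|\tau_{\sigma_0}Q_\eps\pm 1\|_{L^2(I)}$ (which is bounded below by a positive constant since $|\sigma_0|\le \rho/6$), these limits exceed the hypothesized value $F(\sigma_0)\le \delta^2\eps$ by a definite amount. Hence the infimum is attained on a compact interval by continuity.

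The essential step is strict convexity on a small sublevel set. Differentiating again,
\[
\tfrac{1}{2}F''(\sigma) \ = \ \int_I \bigl((\tau_\sigma Q_\eps)'\bigr)^2\,dz \ - \ \int_I (v_\eps - \tau_\sigma Q_\eps)\,(\tau_\sigma Q_\eps)''\,dz.
\]
The change of variables $\zeta=(z-\sigma)/\eps$ together with \eqref{exp.close} yields
\[
\int_I \bigl((\tau_\sigma Q_\eps)'\bigr)^2\,dz \ \ge \ \frac{c_0}{\eps} - O\!\bigl(e^{-c/\eps}\bigr), \qquad
\bigl\|(\tau_\sigma Q_\eps)''\bigr\|_{L^2(I)} \ \le \ C\eps^{-3/2},
\]
uniformly for $\sigma$ in an $O(1)$ neighborhood of $\sigma_0$ (the hypothesis $|\sigma_0|\le\rho/6$ keeps the bulk of the profile well inside $I$). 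By Cauchy--Schwarz the cross term is bounded by $C\,F(\sigma)^{1/2}\eps^{-3/2}$, so choosing $\delta$ small forces $F''(\sigma)\ge c_0/\eps$ on the sublevel set $\{\sigma : F(\sigma)\le 4\delta^2\eps\}$.

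Finally, any minimizer $s_*$ satisfies $F(s_*)\le F(\sigma_0)\le \delta^2\eps$, so the triangle inequality gives $\|\tau_{s_*}Q_\eps - \tau_{\sigma_0}Q_\eps\|_{L^2(I)}\le 2\delta\sqrt{\eps}$. A direct comparison shows that $\|\tau_{s_1}Q_\eps - \tau_{s_2}Q_\eps\|_{L^2(I)}$ behaves like $|s_1-s_2|/\sqrt{\eps}$ for small separations and is bounded below by a positive constant once $|s_1-s_2|$ exceeds a small fixed threshold; thus every minimizer lies within $C\delta\eps$ of $\sigma_0$, inside the convexity window, and uniqueness follows since a strictly convex function on a convex set has at most one critical point. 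The main technical obstacle is this two-sided non-degeneracy estimate for the tanh profile, which converts $L^2$-closeness of two kink translates into closeness of their translation parameters; it is what rules out the coexistence of a second, spurious minimizer.
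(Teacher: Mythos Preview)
Your proof is correct and follows essentially the same approach as the paper: define the squared $L^2$ distance as a function of the translation parameter, establish existence of a minimizer, read off the orthogonality \eqref{optttr} from the first-order condition, and prove uniqueness by showing strict convexity of this function on a small interval containing all minimizers, using the two-sided non-degeneracy estimate $\|\tau_{s_1}Q_\eps - \tau_{s_2}Q_\eps\|_{L^2(I)} \asymp |s_1-s_2|/\sqrt{\eps}$ to localize. The paper's argument differs only in bookkeeping (it works with $\varphi$ and $\eta=\tfrac12\varphi^2$, and tracks the constants $\delta_1$ and $a=\int q'^2$ a bit more explicitly when passing from the coarse localization $|s_*-\sigma|<\delta_1\eps$ to the finer window $|s_*-\sigma|<4\eps\delta/a$ on which convexity is verified).
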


Results in this spirit are in some sense standard, 
but for the convenience of the reader we give a quick proof.

\begin{proof}
Let
\[
\varphi (\sigma) :=  \| v_\eps - \tau_\sigma Q_\eps\|_{L^2(I)},
\qquad\qquad
\eta (\sigma) := \frac 12 \| v_\eps - \tau_\sigma Q_\eps\|_{L^2(I)}^2.
\]
The continuity of translation in $L^p$ spaces implies that
$\varphi$ and $\eta $ are continuous. 
In addition, for any $\sigma\in \R$, the triangle
inequality implies that
\begin{equation}
\| \tau_\sigma Q_\eps - \tau_{s}Q_\eps\|_{L^2(I)}
- \varphi(s) 
\ \le\  \varphi(\sigma)\le
\| \tau_\sigma Q_\eps - \tau_{s}Q_\eps\|_{L^2(I)}
+ \varphi(s) 
\label{varphi1}\end{equation}
Define $f(t) := \| \tau_t q - q\|_{L^2(\R)}^2$. 
Recalling that $Q_\eps(z) = \sign(z)$ for $|z|\ge \frac 23\rho$,
and using \eqref{exp.close} and a change of variables,
\[
\| \tau_\sigma Q_\eps - \tau_{s}Q_\eps\|_{L^2(I)} = 
\| \tau_{\sigma -s} Q_\eps - Q_\eps\|_{L^2(\R)} = \left(\eps f( \frac {|\sigma - s|}\eps)\right)^{1/2} + O(e^{-c/\eps}) 
\]
if $|s|,|\sigma| \le \frac \rho 3$.
Also, it is straightforward to check that $f$ is smooth, with $f(0)=f'(0)=0$ and $f''(0):= 2a >0$
(in fact $a = \int_\R q'^2 = 4/3$.)
It follows that there exists a positive number $\delta_1$ such that 
\begin{equation}
 \frac a{2 \sqrt \eps} |s-\sigma| \ \le \ 
\| \tau_\sigma Q_\eps - \tau_{s}Q_\eps\|_{L^2(I)} + O(e^{-c/\eps})  \  \le \ 
 \frac {2a} {\sqrt \eps} |s-\sigma| 
\label{L2dQ1}\end{equation}
if $|s|,|\sigma| \le \frac \rho 3$ and $|s-\sigma|<\delta_1 \eps$.
Also, since $\sigma \mapsto \| \tau_\sigma Q_\eps - \tau_{s}Q_\eps\|_{L^2(I)} $ is nondecreasing in $|s-\sigma|$, 
\begin{equation}
\| \tau_\sigma Q_\eps - \tau_{s}Q_\eps\|_{L^2(I)} \ge \frac {\delta_1 a}3 \sqrt \eps
\qquad
\mbox{ if $|s|\le \frac \rho 3$ and $|s-\sigma| \ge \delta_1 \eps$ .}
\label{L2dQ2}\end{equation}

By hypothesis, there exists some $s_1$ such that 
$\varphi(s_1)<\delta \sqrt \eps$ and $|s_1 |\le \rho/6$.
Then \eqref{varphi1} and \eqref{L2dQ2} imply that for any $\sigma\in \R$
\begin{equation}\label{qqqq}
\varphi(\sigma) \ge  \frac {\delta_1 a}3 \sqrt \eps - \delta \sqrt \eps \ge \delta\sqrt \eps > \inf \varphi\qquad\mbox{ if }
|\sigma - s_1| \ge \delta_1\eps,
\end{equation}
as long as $\delta < \frac {\delta_1 a}6$. It follows that $\min \varphi$ is attained at some 
$s_*$, and that $|s_* - s_1 | < \delta_1 \sqrt \eps$.

Also, one can easily check using the dominated convergence theorem that 
if $|\sigma| < \frac 13 \rho$ (and thus $Q_\eps(z) = \sign(z)$ in a neighborhood
of the endpoints of $I$, see \eqref{Qeps.def}) then
\[
\eta'(\sigma)
=
\int_I (v_\eps - \tau_{\sigma} Q_\eps)\cdot \tau_{\sigma} Q_\eps' .
\]
Thus equation \eqref{optttr}
follows directly from the optimality of $s_*$.

It remains to prove the uniqueness of the minimizer $s_*$.  Let $\sigma$ be any minimizer 
of $\varphi$. Arguing as in \eqref{qqqq}, we find that $|s_* - \sigma| < \delta_1\eps$.
Repeating the same argument, but now using \eqref{L2dQ1} in place of \eqref{L2dQ2},
we find that $|s_*-\sigma| < 4\eps \delta /a $


To complete the proof, it therefore suffices to show that
if $\delta$ is small enough, then $\eta$ is 
strictly convex
in the interval $(s_* -  4 \eps \delta/a, s_*+4\eps\delta/a)$, and
hence in this interval can only attain its minimum at a single point,
necessarily $s_*$. 

To check convexity, we use the dominated convergence theorem as above to
compute
\[
\eta''(\sigma) = \int_I (\tau_\sigma Q_\eps')^2 - \int_I (v_\eps - \tau_\sigma Q_\eps) \tau_\sigma Q_\eps''.
\]
Using \eqref{exp.close}, we check that if $\eps$ is small enough, then for $|\sigma|< \frac 13 \rho$, 
\[
\| \tau_\sigma Q_\eps'\|_{L^2(I)}^2 \ge \frac {c_0}{2\eps}, \qquad \qquad
\| \tau_\sigma Q_\eps''\|_{L^2(I)} \le \frac C {\eps^{3/2}}.
\]
In addition, if $|\sigma - s_*|\le 4\eps\delta/a < \delta_1\eps$, then we know from \eqref{varphi1}  and \eqref{L2dQ1} that $\varphi(\sigma) \le 9 \delta \sqrt{\eps}$, and thus
\[
\eta''(\sigma) \ge \frac {c_0}\eps - \| v_\eps - \tau_\sigma Q_\eps\|_{L^2(I)} \, \|  \tau_\sigma Q_\eps''\|_{L^2(I)}\ \ge \ \frac {c_0}\eps  - C \frac \delta\eps .
\]
The right-hand side can be made positive by decreasing $\delta$, if necessary.
This proves convexity of $\eta$ when $|\cdot - s_*|\le 4\delta\eps/a$ and hence completes the uniqueness proof.
\end{proof}

\section{Coercivity of $\theta_1$}

The main result of this section shows that under suitable hypotheses,
$\theta_1(v_\eps)$ controls the $H^1_\eps$ norm of $v_\eps$ and the size
of the optimal translation $s_*$.

\begin{proposition}\label{proposition1}There exist positive constants 
$c_1, c_2, c_3$ such that $0<c_3<1$, and for every $\theta\in H^1(I)$, if either
\begin{equation}\label{theta1c1}
\theta_1(v_\eps)\le c_1, \qquad \theta_2(v_\eps)\le c_2
\end{equation}
or 
\begin{equation}\label{L2c3}
\inf_{|s| \le  c_3 \rho} \| v_\eps - \tau_s Q_\eps\|_{L^2(I)} < c_3 \sqrt \eps\ ,
\end{equation}
then for all sufficiently small $\eps$, then there is a unique minimizer $s_*$ 
of $\varphi(s) := \|v_\eps - \tau_s Q_\eps\|_{L^2(I)}$, and
\begin{align}
s_*^2 
&\lesssim \eps^2 + \theta_1(v_\eps)  \label{sstar}\\
\| v_\eps - \tau_{s_*} Q_\eps\|_{H^1_\eps(I)}^2
&\lesssim  \theta_1(v_\eps)
+ e^{-c/\eps}.
\label{gp.est}
\end{align}
\end{proposition}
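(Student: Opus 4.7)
My plan is to deduce the proposition in three stages: (i) verify that each of the two alternative hypotheses supplies the closeness condition needed to invoke Proposition \ref{prop1}, thereby producing $s_*$ and the orthogonality \eqref{optttr}; (ii) derive \eqref{gp.est} from a coercivity argument for the linearized Ginzburg-Landau functional around $\tau_{s_*} q_\eps$; and (iii) derive \eqref{sstar} by exploiting the $z^2$ weight in $\theta_1$ together with the $H^1_\eps$ control on $r := v_\eps - \tau_{s_*} Q_\eps$ from step (ii).

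For step (i), hypothesis \eqref{L2c3} reduces directly to the hypothesis of Proposition \ref{prop1} on choosing $c_3 \le \min(1/6, \delta)$. Under \eqref{theta1c1} I will argue instead from \eqref{coercive2}: the bound $\int_I (\sqrt\eps v_\eps' - (1-v_\eps^2)/\sqrt\eps)^2\,dz \le C\theta_1 + Ce^{-c/\eps}$ says that $v_\eps$ is an approximate $L^2$ solution of the first-order ODE $\eps v' = 1 - v^2$, whose solutions are the translates $\tau_s q_\eps$. The hypothesis $\theta_2 \le c_2$ additionally localizes the interface near $z = 0$. A short Gronwall/selection argument then produces an $s$ with $|s| \le \rho/6$ and $\|v_\eps - \tau_s Q_\eps\|_{L^2(I)}^2 \lesssim \eps(\theta_1 + e^{-c/\eps})$, which lies below the threshold $\delta^2 \eps$ of Proposition \ref{prop1} once $c_1$ is chosen small.

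For step (ii), let $V := \tau_{s_*} Q_\eps$ and $r := v_\eps - V$, so that $\int_I r\, V'\, dz = 0$ by \eqref{optttr}. Because $\eps(\tau_{s_*} q_\eps)' = 1 - (\tau_{s_*} q_\eps)^2$ holds exactly and $Q_\eps - q_\eps$ is exponentially small in every useful norm by \eqref{exp.close}, expansion of the integrand in \eqref{coercive2} yields
\[
\sqrt\eps\, v_\eps' - \frac{1-v_\eps^2}{\sqrt\eps} \;=\; \sqrt\eps\, r' \;+\; \frac{2\,\tau_{s_*} q_\eps}{\sqrt\eps}\, r \;+\; \frac{r^2}{\sqrt\eps} \;+\; O(e^{-c/\eps}).
\]
Squaring, integrating, and integrating by parts the cross term $\int 4\,\tau_{s_*} q_\eps\, r\, r'\,dz = -2\int (\tau_{s_*} q_\eps)'\, r^2\,dz$ (the boundary contribution being exponentially small) produces the quadratic form
\[
\mathcal{Q}(r) \;:=\; \eps \|r'\|_{L^2(I)}^2 \;+\; \tfrac{1}{\eps}\int_I \bigl(6\,(\tau_{s_*} q_\eps)^2 - 2\bigr)\, r^2\, dz,
\]
plus cubic/quartic remainders which can be absorbed once $\|r\|_{L^\infty(I)}$ is under control via Sobolev embedding. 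After the rescaling $w = (z - s_*)/\eps$, $\mathcal{Q}$ becomes the standard quadratic form of the linearized operator $L = -\partial_w^2 + (6\tanh^2 w - 2)$ on $L^2(\R)$, a P\"oschl-Teller operator whose kernel is spanned by the translation mode $q'(w) = \mathrm{sech}^2 w$ and whose next eigenvalue is $3$. The orthogonality \eqref{optttr} translates, up to $O(e^{-c/\eps})$, to $r \perp q'$ in the rescaled variable, so the standard spectral bound gives $\mathcal{Q}(r) \gtrsim \|r\|_{H^1_\eps(I)}^2$; combining with \eqref{coercive2} yields \eqref{gp.est}.

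For step (iii), since $\theta_2 \le c_2$ together with Lemma \ref{L.gsj1} implies $\int_I \bigl(\tfrac\eps 2 (v_\eps')^2 + \tfrac{1}{2\eps}(v_\eps^2-1)^2\bigr)\,dz \ge c_0$, we have
\[
\theta_1(v_\eps) \;\ge\; \int_I z^2 \bigl(\tfrac\eps 2 (v_\eps')^2 + \tfrac{1}{2\eps}(v_\eps^2 - 1)^2\bigr)\,dz.
\]
A direct computation using the evenness of $(q_\eps')^2$ and $(1 - q_\eps^2)^2$ gives $\int_I z^2\bigl(\tfrac\eps 2 (V')^2 + \tfrac{1}{2\eps}(V^2-1)^2\bigr)\,dz = c_0\, s_*^2 + O(\eps^2 + e^{-c/\eps})$. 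Writing $v_\eps = V + r$ and expanding, the linear-in-$r$ contribution, after one integration by parts and invoking the Euler-Lagrange identity $-\eps V'' + \tfrac{2}{\eps} V(V^2 - 1) = O(e^{-c/\eps})$, reduces to $-2\eps \int_I z\, V'\, r\, dz$; substituting $w = z - s_*$ and using \eqref{optttr} to annihilate the $s_*$-piece leaves $-2\eps \int w\, Q_\eps'(w)\, r(w + s_*)\,dw$, which by Cauchy-Schwarz is bounded by $C\eps^{3/2}\|r\|_{L^2(I)}$, and hence by $C\eps^2\sqrt{\theta_1}$ thanks to \eqref{gp.est}. The quadratic and higher-order remainders are dominated by $\rho^2\|r\|_{H^1_\eps(I)}^2 \lesssim \theta_1$. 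Combining yields $c_0\, s_*^2 \lesssim \theta_1 + \eps^2 + e^{-c/\eps}$, which is \eqref{sstar}. The main technical obstacle will be the coercivity claim in step (ii): ensuring that the spectral lower bound for $L$ survives the transfer to a bounded interval with merely approximate orthogonality (against $\tau_{s_*} Q_\eps'$ rather than the genuine zero mode $\tau_{s_*} q_\eps'$), while the cubic/quartic nonlinear remainders are simultaneously absorbed by smallness.
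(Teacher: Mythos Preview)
Your approach is sound but takes a genuinely different route from the paper's, and it is worth spelling out the contrast.

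For the core estimate \eqref{gp.est} under hypothesis \eqref{theta1c1}, the paper does \emph{not} use the orthogonality \eqref{optttr} or any spectral coercivity of the second-order linearized operator. Instead, it treats the first-order relation $v_\eps' - \tfrac{1}{\eps}(1-v_\eps^2) = h_\eps$ as a nonlinear ODE: picking a zero $s_0$ of $v_\eps$ (guaranteed by $\theta_2\le c_2$), setting $w_\eps := v_\eps - \tau_{s_0}q_\eps$, and showing by a contraction-mapping argument on the rescaled problem $w' = -(2q+w)w + h$, $w(0)=0$, that $\|w_\eps\|_{H^1_\eps}^2 \lesssim \eps\|h_\eps\|_{L^2}^2 \lesssim \theta_1 + e^{-c/\eps}$. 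The bound is then transferred from $s_0$ to the optimal $s_*$ by a direct argument using the same ODE. Proposition \ref{prop1} is invoked only at the very end, solely for uniqueness of $s_*$. Your proposal instead uses the ODE only to manufacture enough $L^2$-closeness to trigger Proposition \ref{prop1}, and then relies on the P\"oschl--Teller spectral gap together with \eqref{optttr} to upgrade to $H^1_\eps$. Both are valid; the paper's fixed-point approach is more elementary (no spectral theory, and the quadratic nonlinearity is absorbed automatically by the contraction), whereas your approach is closer in spirit to standard modulation/orbital-stability arguments and makes the role of the orthogonality condition explicit. The technical cost of your route is precisely what you flagged: you must separately arrange a priori smallness of $\|r\|_{L^\infty}$ to absorb the cubic/quartic remainders, and check that the spectral gap survives the passage from $\R$ to $I$ with the approximate (rather than exact) orthogonality. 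Both are doable---for the first, combine the $L^2$ smallness from your step (i) with the crude bound $\eps\|v_\eps'\|_{L^2}^2 \le 2(c_0+\theta_1)$ via Gagliardo--Nirenberg---but they add work the paper avoids.

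For \eqref{sstar} the paper is much blunter than your step (iii): it simply uses $(a+b)^2 \ge \tfrac12 a^2 - b^2$ with $a = (\tau_{s_*}Q_\eps)'$ and $b = W_\eps'$ to extract $\int_I \tfrac{\eps}{4} z^2 (\tau_{s_*}Q_\eps')^2\,dz \lesssim \theta_1$, and then bounds the left side below by $\tfrac12 s_*^2 - \eps^2$ by restricting to $|z - s_*|\le \eps$. Your integration-by-parts argument exploiting the Euler--Lagrange identity and \eqref{optttr} is correct and more structured, but unnecessary for the stated conclusion.

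Finally, under hypothesis \eqref{L2c3} the paper first observes (by choosing $c_3$ small) that $\theta_2\le c_2$ follows, and then splits into the case $\theta_1\le c_1$ (already handled) and the trivial case $\theta_1 > c_1$, where \eqref{gp.est} holds simply because the right-hand side is bounded below by a fixed constant. You should note this dichotomy explicitly, since your spectral argument in step (ii) as written presumes $\theta_1$ small.
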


Estimates in the spirit of \eqref{gp.est} are known, but we do not know a source where
they are proved under the hypotheses that we
impose here, so we give a self-contained proof.

\vskip0.25cm

  The rest of this section is devoted to the

\begin{proof}[Proof of Proposition \ref{proposition1}]
We will first prove the proposition under the assumption
\eqref{theta1c1}, for constants $c_1,c_2$ to be fixed below. 
At the end of the proof, we will consider assumption \eqref{L2c3}.

First, we define $h_\eps : (-\rho,\rho)\to \R$ by
\begin{equation}
 v_\eps' - \frac 1 \eps f_1(v_\eps) =: h_\eps.
\label{heps.def}\end{equation}
Then it follows  from \eqref{coercive2}  that 
\begin{equation}\label{vandf_1}
\int_{-\rho}^{\rho} \eps h_\eps^2 \, dz \ \lesssim 
\theta_1(v_\eps) + C e^{-c/\eps} \lesssim c_1
\end{equation}
for $\eps$ small.

  It is convenient to extend $h$ to the entire real line, by setting $h_\eps = 0$
outside of $(-\rho, \rho)$, and to extend $v_\eps$ by requiring that the ODE \eqref{heps.def}
holds on the entire real line.  This will allow us to translate $v_\eps$ without worrying about
redefining its domain. We continue to use the notation $v_\eps$ and $h_\eps$
for the extended functions.

  It is straightforward to check that if $c_2$ is small enough (depending on $\rho)$, then since 
$v_\eps\in H^1(I)\subset C(I)$, the hypothesis
$\theta_2(v_\eps)\le c_2$ 
implies that 
\begin{equation}
v_\eps(s_0)=0\qquad \qquad \mbox{ for some $|s_0|\le \rho/2$}.
\label{p1.1}\end{equation}
We will prove that 
\begin{equation}\label{weps.est}
\| w_\eps\|_{H^1_\eps(\R)}^2 \le C{\eps}\|h_\eps\|_{L^2(\R)}^2 \,, \qquad
\mbox{ for }w_\eps := v_\eps - \tau_{s}q_\eps.
\end{equation}

We will see that \eqref{gp.est} is easily deduced from this.

Note that 
$w_\eps= v_\eps - \tau_{s_0} q_\eps$ vanishes at $s_0$ and  recall that 
 $\tau_{s_0} q_{\eps}$
 satisfies
\[(\tau_{s_0}q_\eps)' - \frac{1}{\eps}f_{1}(\tau_{s_0}q_\eps) = 0.\]
By subtracting the latter equation from  \eqref{heps.def},
which is satisfied by $v_\eps$, we get
$$\begin{array}{ll}
w_\eps'&=\ds{\frac{1}{\eps}f_{1}(v_\eps)+h_\eps-\frac{1}{\eps}
f_{1}(\tau_{s_0}q_\eps)}=\ds{\frac{1}{\eps}((\tau_{s_0}q_\eps)^{2}-v_\eps^{2})+h_\eps}\vspace{5 pt}
\\
&=\ds{\frac{1}{\eps}(-w_\eps)(\tau_{s_0}q_\eps+v_\eps)+h_\eps}.
\end{array}$$
Thus, $w_\eps$ satisfies the ordinary differential equation 
\begin{equation}\label{NLODEa}
\left\{\begin{array}{rl}
w_\eps'=& \ds{\frac{1}{\eps}}\ds{(-w_\eps^{2}-2 w_\eps\, \tau_s q_\eps)+h_\eps}\text{ on }\R , \vspace{4 pt}\\ 
w_\eps(s_0)=&0.
\end{array}\right.
\end{equation}
We write the above problem in a more convenient form via an appropriate rescaling of the functions. Namely,  
\[w(z) := w_\eps(\eps(z-s_0)) \text{ and }h(z) := \eps h_\eps(\eps(z-s_0)).\] Then we have
\begin{equation}\label{NLODE}
\left\{\begin{array}{rl}
w'=&  \ds{-( 2q+ w)w+h}\text{ on }\R  \\ 
w(0)=&0.
\end{array}\right.
\end{equation}
Moreover, it follows from \eqref{vandf_1} that, if $\eps$ is small, then
\begin{equation}
\| h\|_{L^2(\R)}^2 \ = \  \eps \| h_\eps\|_{L^2(\R)}^2  \ \lesssim \  \theta_1(v) + Ce^{-c/\eps}  \ \lesssim \  c_1.
\label{heps.small}\end{equation}
Since $\| w_\eps\|_{H^1_\eps(\R)} = \| w\|_{H^1(\R)}$, it now suffices to estimate the
latter quantity.

To do this, we will show via the contraction mapping principle that if $c_1$ is small then
\eqref{NLODE} admits a unique solution which satisfies
\begin{equation}\label{sts}
\| w\|_{H^1(\R)}^2 \le C \|h \|_{L^2(\R)}^2,
\end{equation}
which is the same as \eqref{weps.est}, after rescaling.

  We set 
\begin{equation}\label{Ballalpha}
\forall \alpha>0,
~B_{\alpha}:=\left\{w\in H^{1}(\R) ,\text{ such that }\|w\|_{H^{1}(\R)}\leq\alpha\right\}.
\end{equation}
In order to use the contraction mapping principle on $B_{\alpha}$, we  define the following operator $\mathcal{S}$:

\begin{defin}\label{definoperator} Given $w_{0}\in B_{\alpha},$ we define  $\mathcal{S}(w_{0}):=w_{1}$ to be the solution of  
\begin{equation}\label{linearizedODE}
\left\{\begin{array}{rl}
w_{1}'=&-(2q+w_{0})w_{1}+h \qquad \text{ on }\R,\vspace{5 pt}\\
w_{1}(0)=&0.
\end{array}\right.
\end{equation}
\end{defin}

  We prove the following result:

\begin{lem}\label{lemma1a}
Let $\mathcal{S}$ be the operator defined in (\ref{linearizedODE}) above.
There exists a constant $C$ such that  
\[
\mbox{ if }\|w_0\|_{H^1} \le \sqrt{2}, \mbox{ then }  \ \ \ \| \mathcal{S}w_0\|_{H^1} \le C \| h \|_{L^2}.
\]
\end{lem}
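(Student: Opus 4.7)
The plan is to solve the linear first-order ODE \eqref{linearizedODE} explicitly by an integrating factor, extract a pointwise convolution bound on $w_1$, and then apply Young's inequality together with the ODE itself to control $\|w_1\|_{H^1}$.

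First, set $A(z) := 2q(z) + w_0(z)$. Since \eqref{linearizedODE} is linear with $w_1(0)=0$, its unique solution is
\[
w_1(z) = \int_0^z h(s)\, \exp\!\Bigl(-\int_s^z A(r)\,dr\Bigr)\,ds.
\]
The transport part integrates exactly, because $q=\tanh$: $\int_s^z 2q(r)\,dr = 2\log\cosh z - 2\log\cosh s$, which yields
\[
\frac{\cosh^2 s}{\cosh^2 z} \le 4\, e^{-2|z-s|}
\qquad\text{whenever $s,z$ have the same sign and $|s|\le|z|$.}
\]
For the $w_0$ contribution Cauchy-Schwarz gives $\bigl|\int_s^z w_0\bigr| \le |z-s|^{1/2}\|w_0\|_{L^2}$, and $\|w_0\|_{L^2}\le \sqrt{2}$ by hypothesis.

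Second, in the defining integral for $w_1(z)$ the variable $s$ always lies between $0$ and $z$, so $s$ shares the sign of $z$ and $|s|\le |z|$; combining the two estimates above gives, for every $z\in\R$,
\[
|w_1(z)| \le 4\int_{\R} K(z-s)\, |h(s)|\, ds, \qquad K(t) := e^{-2|t| + \sqrt{2}\,|t|^{1/2}}.
\]
The function $-2|t|+\sqrt{2}|t|^{1/2}$ attains a finite maximum and decays linearly at infinity, so $K\in L^1(\R)$. Young's inequality then gives $\|w_1\|_{L^2(\R)} \le 4\,\|K\|_{L^1}\,\|h\|_{L^2(\R)}$.

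Third, the one-dimensional Sobolev embedding $H^1(\R)\hookrightarrow L^\infty(\R)$ turns $\|w_0\|_{H^1}\le \sqrt{2}$ into $\|w_0\|_{L^\infty}\le C$, and the ODE itself yields
\[
\|w_1'\|_{L^2}\ \le\ (2+\|w_0\|_{L^\infty})\,\|w_1\|_{L^2} + \|h\|_{L^2}\ \le\ C\,\|h\|_{L^2},
\]
completing the $H^1$ bound.

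The only real obstacle is the sign of the coefficient in the integrating factor: because $2q(z)=2\tanh z$ changes sign at $z=0$, a naive energy estimate on all of $\R$ fails. What saves the argument is that $w_1(0)=0$ is imposed precisely at the zero of $q$, so on each half-line the integration $\int_s^z 2q$ has a definite sign and the explicit antiderivative $2\log\cosh$ provides exponential decay in $|z-s|$. The subleading $\sqrt{|t|}$ term produced by $w_0$ is strictly dominated by $-2|t|$, which is why the kernel remains integrable and the convolution estimate closes.
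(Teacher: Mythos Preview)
Your proof is correct and follows essentially the same route as the paper's: both write the explicit variation-of-constants formula, exploit the antiderivative $2\log\cosh$ of $2q$ to obtain exponential decay of the kernel on each half-line (using that $w_1(0)=0$ is imposed at the zero of $q$), apply Young's convolution inequality for the $L^2$ bound on $w_1$, and then read off the bound on $w_1'$ directly from the ODE. The only point of difference is how the $w_0$ contribution to the integrating factor is handled: you use Cauchy--Schwarz to produce the subleading $\sqrt{2}\,|t|^{1/2}$ correction in the kernel, whereas the paper invokes the sharp one-dimensional Sobolev constant $\|w_0\|_{L^\infty}^2\le \tfrac12\|w_0\|_{H^1}^2\le 1$ to replace $-w_0$ by $+1$ pointwise, yielding the cleaner kernel $e^{-|t|}$; your version has the mild advantage of not relying on the sharp constant.
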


\begin{proof}[Proof of Lemma \ref{lemma1a}]
  For each $s$, we set  $$\Phi(s):=\int_{0}^{s}(2q(t)+w_{0}(t))~dt.$$
Then we have the explicit fromula
$$w_{1}(s)=e^{-\Phi(s)}\left(\int_{0}^{s}e^{\Phi(t)}h(t)~dt\right)$$
which leads us to write 
\begin{equation}\label{solutionoperator}
\mathcal{S}(w_{0})(s)=w_{1}(s)=\ds{\int_{0}^{s}\exp\left(-\int_{t}^{s}(2q(\tau)+w_{0}(\tau))~d\tau\right)h(t)~dt}.
\end{equation}
To prove our claim about the map $\mathcal{S}$, we use first the $1$-dimensional Sobolev embedding (with sharp constant $\frac 12$) to note that
\begin{equation}
\|w_0\|^{2}_{L^\infty} \le \frac 12 \| w_0\|^{2}_{H^1} \le 1\
\label{w0Linfty}\end{equation}
if $ \|w_0\|_{H^1}\le \sqrt{2}$, which we henceforth assume to hold.
Thus
$2q+w_{0}\geq2q-1,$ or 
$$-(2q+w_{0})\leq-2q+1.$$ Thus for $s\ge 0$, 
\begin{equation}
|\mathcal{S}(w_{0})(s)|\leq\ds{\int_{0}^{s}\exp\left(\int_{t}^{s}(-2q(\tau)+ 1)~d\tau\right)|h(t)|~dt}.
\end{equation}

Using the explicit form of $q$, we can integrate to find that for any $s \ge 0 $, 
\begin{align}
|\mathcal{S}w_{0}(s)|&\leq\ds{\int_{0}^{s}\exp\left(\ds{2\ln\left(\frac{\cosh t}{\cosh s}\right)+(s-t)}\right)|h(t)|~dt}\vspace{5 pt}\nonumber\\
&=\ds{\int_{0}^{s}\left(\frac{\cosh t}{\cosh s}\right)^{2} e^{s-t} |h(t)|~dt}\vspace{6 pt}
\label{Swa}\end{align}
Since $\frac 12 e^a \le \cosh a \le e^a$ for $a>0$, it follows that
\begin{align*}
\mathcal{S}w_{0}(s) 
&\le  4\int_0^s e^{2(t-s)} e^{s-t} |h(t)| \ dt  \\
&=
4E * H (s)
\end{align*}
for $s>0$, where
\[
E(s) := \begin{cases} 
e^{ -s} &\mbox{ if }s> 0\\
0&\mbox{ if }s\le 0
\end{cases},
\qquad
H(s) := \begin{cases} 
|h(s)| &\mbox{ if }s> 0\\
0&\mbox{ if }s\le 0.
\end{cases}
\]
Then it follows from Young's inequality that for any $p\ge 2$,
\[
\frac 14 \| {\bf 1}_{s>0 } \  \mathcal{S}w_0\|_{L^p} \le  \| E\|_{L^q} \ \| H \|_{L^2}  =  \| h {\bf 1}_{s>0 }\|_{L^2},\qquad \frac 1q = \frac 1p + \frac 12 .
\]

  Since the same arguments (with some changes of sign) apply to $ {\bf 1}_{s<0 } \  \mathcal{S}w_0$, 
it follows that for any $p\ge 2$,
\begin{equation}
\|  \mathcal{S}w\|_{L^p}   \le   4 \| h\|_{L^2} \qquad \mbox{ for all }w\in B_\alpha \label{SwL2}\end{equation}
as long as $\alpha$ is sufficiently small ($\alpha\leq \sqrt{2}$). Lastly, we note that the definition  of $\mathcal S$ in \eqref{linearizedODE}, together with \eqref{SwL2}
and the Sobolev embedding, leads to
\begin{align}
\nonumber \|( \mathcal{S}w)' \|_{L^2}
&\le (2 + \|w\|_\infty) \| \mathcal{S} w\|_{L^2} + \|h\|_{L^2}\\
&\le (2 + C \alpha)C \|h\|_{L^2} + \|h\|_{L^2}
\le C \|h\|_{L^2}. \label{Sw'}
\end{align}
The estimates \eqref{SwL2} and \eqref{Sw'} finish the proof of Lemma \ref{lemma1a}.
\end{proof}
\begin{rem}
The Sobolev embedding with sharp constant $1/2$, which we used in  the proof of Lemma \ref{lemma1a} above, allows us to see how small the radius $\alpha$ of the  $H^1$ ball $B_\alpha$ could be--independently of any parameters ($\alpha\leq \sqrt{2}$). 

\end{rem}
\begin{corl}\label{corollary1} 
There exists a constant $C>0$ such that for any $\alpha\in (0,\sqrt2]$, 
\[
\mbox{ if }\|h \|_{L^2(\R)} 
< C^{-1}\alpha \quad  \text{ and }\quad 0<\alpha\leq \sqrt 2,
\]
then $\mathcal S(B_\alpha)\subseteq B_\alpha$. That is, the $H^1$-ball $B_\alpha$ is stable under $\mathcal{S}.$
\end{corl}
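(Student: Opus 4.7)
The corollary is essentially an immediate consequence of Lemma \ref{lemma1a}, so the plan is short. The strategy is to pick the constant $C$ in the corollary to be the same (or larger than) the constant $C$ produced by Lemma \ref{lemma1a}, and then verify the inclusion $\mathcal{S}(B_\alpha)\subseteq B_\alpha$ pointwise on the ball.

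First, I would fix any $\alpha\in (0,\sqrt 2]$ and any $w_0\in B_\alpha$. Since $\alpha\le\sqrt 2$, we have $\|w_0\|_{H^1(\R)}\le \alpha\le\sqrt 2$, so the hypothesis of Lemma \ref{lemma1a} is satisfied. Applying the lemma, there is an absolute constant $C_0>0$ (independent of $\alpha$ and of $h$) such that
\[
\|\mathcal{S}w_0\|_{H^1(\R)} \ \le\ C_0\,\|h\|_{L^2(\R)}.
\]
Next, I would simply choose the constant $C$ in the statement of the corollary to be $C := C_0$. Then the hypothesis $\|h\|_{L^2(\R)} < C^{-1}\alpha$ immediately gives
\[
\|\mathcal{S}w_0\|_{H^1(\R)} \ \le\ C_0\cdot C_0^{-1}\alpha \ =\ \alpha,
\]
so $\mathcal{S}w_0\in B_\alpha$. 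Since $w_0\in B_\alpha$ was arbitrary, this yields $\mathcal{S}(B_\alpha)\subseteq B_\alpha$, completing the proof.

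There is no real obstacle here: the content of the corollary has already been absorbed into Lemma \ref{lemma1a}, and the role of this corollary is purely to package the estimate in a form convenient for the upcoming contraction mapping argument (where one will also need to verify, separately, that $\mathcal{S}$ is a contraction on $B_\alpha$ for small enough $\|h\|_{L^2}$, and then invoke Banach's fixed point theorem to obtain a solution $w$ of \eqref{NLODE} satisfying \eqref{sts}). The only subtle point to double-check is that the constant produced by Lemma \ref{lemma1a} is indeed independent of $\alpha$ in the range $\alpha\in(0,\sqrt 2]$; inspection of the proof of Lemma \ref{lemma1a} confirms this, since the key estimates \eqref{SwL2} and \eqref{Sw'} depend only on the universal bound $\|w_0\|_{L^\infty}\le 1$, which holds uniformly on $B_{\sqrt 2}$.
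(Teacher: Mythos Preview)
Your proof is correct and follows essentially the same argument as the paper: apply Lemma \ref{lemma1a} to any $w_0\in B_\alpha$ (valid since $\alpha\le\sqrt 2$) to obtain $\|\mathcal S w_0\|_{H^1}\le C\|h\|_{L^2}$, then use the hypothesis $\|h\|_{L^2}<C^{-1}\alpha$ to conclude $\mathcal S w_0\in B_\alpha$. Your additional remarks about the uniformity of the constant in $\alpha$ and the role of the corollary in the contraction argument are accurate and helpful.
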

\begin{proof} Let $w\in B_\alpha$ with $\alpha\leq \sqrt 2.$  Then 
Lemma  \ref{lemma1a} implies that there exists a constant $C$ such that 
$\|\mathcal S(w)\|_{H^1(\R)}\leq C\|h \|_{L^2(\R)} $.
Thus $\mathcal S(w)\in B_\alpha$ if $\|h \|_{L^2(\R)}< C^{-1}\alpha$ (with the same
constant $C$.
\end{proof}
We next  prove that if $\|h\|_{L^2}$ is sufficiently small, then there
exists some $\alpha>0$ such that $\mathcal S$ is a contraction mapping on $B_\alpha$.

\begin{lem}\label{lemma1}
Let $\mathcal{S}$ be the operator defined in (\ref{linearizedODE}) above.
Then there exist constants $C, \alpha_0>0$ such that 
\[
\mbox{ if }\|h \|_{L^2}  \le \alpha_0 
\]
then the map $\mathcal S$ is a contraction mapping of $B_\alpha$ to itself, for
$\alpha = C \| h\|_{L^2}$.

  Hence, if $\| h\|_{L^2} \le\alpha_0$, then the unique solution $w$ of the initial
value problem \eqref{NLODE} satisfies \eqref{sts}.
\end{lem}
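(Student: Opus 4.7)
My plan is to prove the lemma by a standard contraction-mapping argument, reducing the nonlinear contraction estimate to the same linear estimate already established in Lemma \ref{lemma1a}. The key observation is that if $w_0, \tilde w_0 \in B_\alpha$ and $w_1 = \mathcal S(w_0)$, $\tilde w_1 = \mathcal S(\tilde w_0)$, then subtracting the two instances of \eqref{linearizedODE} gives
\[
(w_1 - \tilde w_1)' \;=\; -(2q + w_0)(w_1 - \tilde w_1) \;-\; (w_0 - \tilde w_0)\,\tilde w_1, \qquad (w_1-\tilde w_1)(0)=0.
\]
This has exactly the form of \eqref{linearizedODE} with $w_0$ unchanged and $h$ replaced by $\tilde h := -(w_0 - \tilde w_0)\tilde w_1$. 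Since $\|w_0\|_{H^1}\le \alpha\le \sqrt2$, Lemma \ref{lemma1a} applies verbatim and yields
\[
\|w_1 - \tilde w_1\|_{H^1} \;\le\; C \|\tilde h\|_{L^2}.
\]

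Next I would bound $\|\tilde h\|_{L^2}$ using the sharp $H^1\hookrightarrow L^\infty$ embedding (which is already invoked in the proof of Lemma \ref{lemma1a}) and the output bound $\|\tilde w_1\|_{L^2}\le \|\tilde w_1\|_{H^1}\le C\|h\|_{L^2}$ from Lemma \ref{lemma1a}:
\[
\|\tilde h\|_{L^2} \;\le\; \|w_0 - \tilde w_0\|_{L^\infty}\,\|\tilde w_1\|_{L^2}
\;\le\; \tfrac{1}{\sqrt 2}\,\|w_0 - \tilde w_0\|_{H^1}\cdot C\|h\|_{L^2}.
\]
Combining, $\|\mathcal S(w_0) - \mathcal S(\tilde w_0)\|_{H^1} \le C'\,\|h\|_{L^2}\,\|w_0 - \tilde w_0\|_{H^1}$. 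Choosing $\alpha_0$ so small that $C'\,\alpha_0 \le \tfrac12$, the map $\mathcal S$ is a strict contraction on $B_\alpha$ for every $\alpha\in(0,\sqrt 2]$ (the contraction constant is independent of $\alpha$).

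To conclude, I pick $\alpha := 2C\|h\|_{L^2}$, where $C$ is the constant from Lemma \ref{lemma1a}; after possibly shrinking $\alpha_0$ so that $2C\alpha_0\le \sqrt 2$, Lemma \ref{lemma1a} gives $\|\mathcal S w_0\|_{H^1}\le C\|h\|_{L^2} = \alpha/2$, so $\mathcal S(B_\alpha)\subseteq B_\alpha$ (this is a mild strengthening of Corollary \ref{corollary1} with the same proof). The Banach fixed-point theorem then supplies a unique fixed point $w\in B_\alpha$, which by definition solves \eqref{NLODE} and obeys $\|w\|_{H^1(\R)}\le \alpha = 2C\|h\|_{L^2}$, i.e.\ \eqref{sts}. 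Uniqueness of the solution of \eqref{NLODE} in all of $H^1(\R)$ (not merely in $B_\alpha$) follows from standard ODE theory, since the right-hand side of \eqref{NLODE} is locally Lipschitz in $w$ and any $H^1(\R)$ solution is automatically continuous and bounded.

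I expect no serious obstacles: the heart of the argument is merely the reuse of Lemma \ref{lemma1a} on the difference equation, which is possible precisely because that lemma's hypothesis involves only the coefficient $w_0$ (with norm $\le \sqrt 2$) and not the inhomogeneity. The only mildly delicate point is the bookkeeping of constants to ensure that one can simultaneously arrange $\alpha\le\sqrt 2$, the stability $\mathcal S(B_\alpha)\subseteq B_\alpha$, and the contraction constant $<1$; all three follow by taking $\alpha_0>0$ small enough depending only on the universal constant $C$ in Lemma \ref{lemma1a}.
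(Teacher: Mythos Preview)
Your proposal is correct and follows essentially the same route as the paper: subtract the two instances of \eqref{linearizedODE}, observe that the difference again solves an equation of the form \eqref{linearizedODE} with a new inhomogeneity, apply Lemma~\ref{lemma1a} to that difference, and bound the new inhomogeneity via $H^1\hookrightarrow L^\infty$ together with the output bound $\|\mathcal S(\cdot)\|_{H^1}\le C\|h\|_{L^2}$. The only cosmetic differences are that the paper writes the coefficient of the difference equation as $\hat w$ rather than $w_0$ (an immaterial symmetry) and bounds the product the other way around, using $\|w-\hat w\|_{L^2}\,\|w_1\|_{L^\infty}$ instead of your $\|w_0-\tilde w_0\|_{L^\infty}\,\|\tilde w_1\|_{L^2}$; both are valid.
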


\begin{proof} 

Assume that $\| h\|_{L^2} < \alpha_0$ (to be adjusted below) and set $\alpha = C\| h\|_{L^2}$, for the same $C$ as in Corollary \ref{corollary1}. We require $\alpha_0$ to be small enough that $\alpha \le\sqrt 2$; then
Corollary \ref {corollary1} applies, and it guarantees that
$\mathcal S(B_\alpha) \subset B_\alpha$.

Let $w , \hat{w}\in B_\alpha$   and set $\mathcal{S}w=w_1$, $\mathcal{S}\hat{w}=\hat{w}_1$ and $v:=w_1-\hat{w}_1.$  The main point in the proof of this lemma is to get estimates on $\|v'\|_{L^2(\R)}$ in terms of $\|w-\hat w\|_{H^1(\R)}.$
We write  \eqref{linearizedODE} for $w$ and $\hat w$, and get
\[\begin{array}{ll}
(w_{1}-\hat{w}_1)'&=-(2q+w)w_1+(2q+\hat w)\hat w_1\vspace{5 pt}\\
&=-(2q+\hat w)(w_1 - \hat w_1)+w_1(\hat w-w).
\end{array}
\] 
That is, \begin{equation}\label{towards.cont1}
v'=-(2q+\hat w)v+(\hat{w}-w){w}_{1}.
\end{equation}
Since $w_1(0) = \hat w_1(0)= 0$, it follows that 
$v$ solves \eqref{linearizedODE} with $w_0$ replaced
by $\hat w$ and $h$ replaced by $w_1(\hat w-w)$.
By assumption, $\| \hat w\|_{H^1} <\alpha \le \sqrt 2$, so 
we may use Lemma \ref{lemma1a} to conclude that
 \[
 \begin{array}{rll}
 \|v\|_{H^1}&\leq C\| ( \hat w - w) w_1\|_{L^2} &\\
& \leq C \|w-\hat{w}\|_{L^2}\|\hat{w_1}\|_{H^1}\
& \mbox{ (using the embedding $H^1\hookrightarrow L^\infty$),}\\
&\leq C\alpha_0  \|w-\hat{w}\|_{H^1} 
\qquad  &\mbox{ (since $\hat w_1 = \mathcal S \hat w \in B_\alpha$ and $\alpha = C \|h\|_{L^2}\le C \alpha_0$).}\end{array}
\]

For this choice of $\alpha_0$, if $\|h\|_{L^2}<\alpha_0$, there is a unique
fixed point $w$ of $\mathcal S$ in $B_\alpha$, and this  clearly solves \eqref{NLODE}
and satisfies the estimate we are seeking, {\em i.e. } $\|w\|_{H^1}\le C \|h\|_{L^2}$.
On the other hand, the initial value problem \eqref{NLODE} has a
unique solution as long as that solution remains bounded. It follows that this
solution agrees with the fixed point $w$ of $\mathcal S$. Consequently,
the solution $w$ of \eqref{NLODE} satisfies \eqref{sts}.

Thus $\mathcal S: B_\alpha\to B_\alpha$ is a contraction mapping if  (in addition to the smallness condition imposed above) $\alpha_0$ is small enough that $C\alpha_0<1$.
\end{proof} 

We break the remainder of the proof of Proposition \ref{proposition1} into
several small pieces.

\begin{proof}[Proof that \eqref{theta1c1} implies \eqref{gp.est}]
Assume that \eqref{theta1c1} holds, for 
$c_1, c_2>0$ no greater than the constants of the same name in Lemma \ref{L.gsj1}, and such that $c_2$ implies \eqref{p1.1}. In addition, in view
of \eqref{heps.small}, we can fix $\eps_0>0$ such that,
after taking $c_1$ smaller if necessary, we have
$\| h\|_{L^2} \le \alpha_0$ whenever $\theta_1(v_\eps)\le c_1$ and $0<\eps<\eps_0$.
It then follows from Lemma \ref{lemma1} that \eqref{sts} holds, and hence 
\eqref{weps.est}.

Now let $s_*$ minimize $\| v_\eps - \tau_sQ_\eps\|_{L^2(I)}$.
(It is clear that the minimum  is attained,
since $\tau_sQ_\eps(z) = -\sign(z)$ for all $z\in I$ whenever $|s| \ge 2\rho$.)
Let $W_\eps := v_\eps - \tau_{s_*} q_\eps$. 
Then from the optimality of $s_*$,
because $Q_\eps$ and $q_\eps$ are exponentially close,
and using  \eqref{weps.est}, we have
\begin{align}
\frac 1 \eps\|W_\eps\|_{L^2(I)}^2
\le
\frac 1 \eps\| v_\eps - \tau_{s_0}q_\eps\|_{L^2(I)}^2 + C e^{-c/\eps}
&= \frac 1 \eps \| w_\eps\|_{L^2}^2 + C e^{-c/\eps}\nonumber \\
&\lesssim \theta_1(v_\eps) + C e^{-c/\eps}.
\label{Weps1}\end{align}
Also, exactly as in the argument leading to \eqref{NLODEa}, 
$W_\eps$ satisfies
\begin{equation}\label{Weps2}
W_\eps' = -\frac 1 \eps(2\tau_{s_*}q_\eps + W_\eps) W_\eps + h_\eps
\end{equation}
for the same $h_\eps$ defined in \eqref{heps.def} (but without the initial condition
in \eqref{NLODEa}.) Note that
\[
\|W_\eps\|_{L^\infty(I)} = \| w_\eps + \tau_{s_0}q_\eps - \tau_{s_*}q_\eps \|_{L^\infty(I)} \le \|w_\eps\|_{L^\infty(I)}+2 \le \alpha+2.
\]
We thus see from \eqref{Weps1}, \eqref{Weps2}, and \eqref{vandf_1} that 
\[
\sqrt \eps \| W_\eps'\|_{L^2(I)} \le \frac 1 {\sqrt{\eps}} (\alpha+4) \| W_\eps\|_{L^2(I)} + 
\sqrt \eps \| h_\eps\|_{L^2(I)}
\le C\big( \theta_1(v_\eps) + C e^{-c/\eps})^{1/2}.
\]
By combining this with \eqref{Weps1} and recalling \eqref{exp.close} that $q_\eps$ and $Q_\eps$ are exponentially close, we conclude that \eqref{gp.est} holds.
\end{proof}

\begin{proof}[Proof that \eqref{L2c3} implies \eqref{gp.est}]
Now assume \eqref{L2c3} instead of \eqref{theta1c1}.
We  fix $c_3$ such that \eqref{L2c3} implies that
$\theta_2(v_\eps) \le c_2$ for all sufficiently small $\eps>0$.
It is easy to check that this can be done.

With this choice, we may assume that $\theta_1(v_\eps)\ge c_1$, as
otherwise conclusion \eqref{gp.est}
is already known to hold, by our arguments above.

We define $s_*$ as above. 
It follows directly from \eqref{L2c3}  that
\[
\frac 1 \eps\| v_\eps - \tau_{s_*} Q_\eps\|_{L^2(I)}^2 \le c_3^2.
\]
Then
\begin{align*}
\| v_\eps - \tau_{s_*} Q_\eps\|_{H^1_\eps(I)}^2
&=
\frac 1 \eps \| v_\eps - \tau_{s_*} Q_\eps\|_{L^2(I)}^2+ \eps \| (v_\eps - \tau_{s_*} Q_\eps)'\|_{L^2(I)}^2
\\
&\le 
c_3^2 + 2\eps \| v_\eps ' \|_{L^2(I)}^2 + 2\eps \| Q_\eps'\|_{L^2(\R)}^2.
\end{align*}
Since $\theta_1(v_\eps)\ge c_1$,  and since $c_1, c_3$ are fixed, it is clear that
\[
c_3 = ( \frac{c_3}{c_1}) c_1 \le C \theta_1(v_\eps) , \qquad  \eps \| Q_\eps'\|_{L^2(\R)}^2 = c_0 + Ce^{-c/\eps} \le C c_1\le  C \theta_1(v_\eps)
\]
for $C$ independent of $\eps$, as long as $\eps$ is small. Moreover,
\begin{align*}
\int_{\rho}^\rho \eps v_\eps'^2  
&\le
2 \left(\int_{-\rho}^\rho \frac \eps 2 v_\eps'^2 + \frac 1 {2\eps}(v_\eps^2-1)^2 dz - c_0 \right) + 2c_0
\\
&\le
2 \theta_1(v_\eps) + 2c_0\\
&\le
C\theta_1(v_\eps) .
\end{align*}
again using the fact that $c_0 \le C c_1 \le C \theta_1(v_\eps)$. We obtain \eqref{gp.est} by combining these inequalities.
\end{proof}

{\em Proof of \eqref{sstar}}.
We now prove that $s_*^2 \lesssim \eps^2 +\theta_1(v)$,
if  $s_*$ minimizes $\| v_\eps - \tau_s Q_\eps\|_{L^2(I)}$.
For this, it is convenient to write $Q_\eps^{s_*} := \tau_{s_*}Q_\eps$ and $W_\eps := v_\eps - Q_\eps^{s_*}$.
If $\theta_j(v)\le c_j$ for $j=1,2$, then it follows from \eqref{coercive1} that
\begin{align*}
\theta_1(v_\eps) &\ge
 \int_{-\rho}^\rho
z ^2 \left(\frac \eps 2 v_\eps'\,^2 + \frac 1{2\eps}(v_\eps^2-1)^2 \right) dz\\
&=
 \int_{-\rho}^\rho
z ^2 \left(\frac \eps 2 (Q_\eps^{s_*} + W_\eps)'\,^2 + \frac 1{2\eps}((Q_\eps^{s_*} + W_\eps)^2-1)^2 \right) dz.
\end{align*}
Discarding a positive term and using the inequality
$(a+b)^2 \ge \frac 12 a^2 - b^2$, we conclude that
\[
\theta_1(v) \ge  \int_{-\rho}^\rho \frac \eps 4  z^2 (Q_\eps^{s_*})'^2 \ dz - \int_{-\rho}^\rho\frac \eps 2 W_\eps'^2 \, dz,
\]
or upon rearranging and using \eqref{gp.est},
\[
 \int_{-\rho}^\rho \frac \eps 4 z^2 (Q_\eps^{s_*})'^2 \ dz \lesssim \theta_1(v).
\]
Since
\[
\int_{-\rho}^\rho \frac \eps 4 z^2 (Q_\eps^{s_*})'^2 \ dz  
\ \ge \ (\min_{|z-s|\le\eps} z^2) 
\int_{s_*-\eps}^{s+\eps} \frac \eps 4 (Q_\eps^{s_*})'^2 \ dz  
\gtrsim  \ 
\min_{|z-s_*|\le\eps} z^2 \  \ge \frac 12 s_*^2 - \eps^2
\]
we conclude that $s_*^2 \lesssim \eps^2+ \theta_1(v)$.

\end{proof}

\begin{proof}[Proof of the uniqueness of $s_*$]
Finally, by taking $c_1, c_3$ smaller if necessary, we can arrange (in view of \eqref{gp.est} and \eqref{sstar})
that either \eqref{theta1c1} or  \eqref{L2c3} implies the
hypothesis of Proposition \ref{prop1}, which is that  ${\inf_{|\sigma|\le \rho/6 }
 \| v_\eps - \tau_\sigma Q_{\eps}\|_{L^2(\R)}}\leq\delta\sqrt{\eps}$. 
The uniqueness of $s_*$ then follows.

  This completes the proof of Proposition \ref{proposition1}.\end{proof}

\section{Proof of Theorem  \ref{main.theorem}}

In this section we use Proposition \ref{proposition1} and the results from \cite{galjer, jer0909} recalled in Section \ref{priorwork} to complete the proof of our main result.

We assume that $\Gamma, T_0,T^0$ are given, and 
that $u_\eps$ is the solution of \eqref{nlw} described in Proposition \ref{old.results}.
We fix 
$\rho, T_1, T^1$ as in Proposition \ref{old.results}, and we recall that $v_\eps := u_\eps \circ \Psi^{-1}:(-T_1,T_1)\times \mathbb S^1\times I\to \R$.

\begin{lem}\label{L.Linf}
If $\eps$ is sufficiently small, then for {\em every} $(y_0, y_1)\in (-T_1,T_1)\times \mathbb S^1$,
\begin{equation}\label{L2again}
\inf_{|s| \le c_3\rho } \| v_\eps(y_0, y_1, \cdot)  - \tau_s Q_\eps\|_{L^2(I)} \lesssim \eps^{3/4}.
\end{equation}
As a result, $\inf_{|s| \le c_3\rho } \| v_\eps(y_0, y_1, \cdot)  - \tau_s Q_\eps\|_{L^2(I)} \le c_3\sqrt{\eps}$, where $c_3$ is the constant in the hypothesis \eqref{L2c3} of Proposition \ref{proposition1}.
\end{lem}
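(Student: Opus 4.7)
The plan is to apply the one-dimensional Sobolev embedding $W^{1,1}(\mathbb S^1)\hookrightarrow L^\infty(\mathbb S^1)$ -- which is precisely where the restriction to $n=2$ enters -- to the function
\[
\phi(y_1) := \inf_{|s|\le c_3\rho}\|v_\eps(y_0,y_1,\cdot) - \tau_s Q_\eps\|_{L^2(I)}^2,\qquad y_1\in \mathbb S^1.
\]
If I can show $\int_{\mathbb S^1}\phi\,dy_1\lesssim\eps^2$ and $\int_{\mathbb S^1}|\phi'|\,dy_1\lesssim\eps^{3/2}$, the embedding gives $\|\phi\|_{L^\infty(\mathbb S^1)}\lesssim\eps^{3/2}$, i.e.\ $\sqrt{\phi}\lesssim\eps^{3/4}$, which is \eqref{L2again}. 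The final sentence of the lemma follows at once because $\eps^{3/4}\le c_3\sqrt\eps$ for $\eps$ below a threshold depending on the implicit constant and $c_3$.

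For the derivative bound, the triangle inequality in $L^2(I)$ gives
\[
\bigl|\sqrt{\phi(y_1)}-\sqrt{\phi(y_1')}\bigr|\le \|v_\eps(y_0,y_1,\cdot)-v_\eps(y_0,y_1',\cdot)\|_{L^2(I)},
\]
so that $\sqrt\phi$ is absolutely continuous on $\mathbb S^1$ with $|(\sqrt\phi)'(y_1)|\le \|\partial_{y_1}v_\eps(y_0,y_1,\cdot)\|_{L^2(I)}$ almost everywhere (by Lebesgue differentiation). Writing $\phi'=2\sqrt\phi\,(\sqrt\phi)'$ and applying Cauchy--Schwarz on $\mathbb S^1$,
\[
\int_{\mathbb S^1}|\phi'|\,dy_1 \le 2\Bigl(\int_{\mathbb S^1}\phi\,dy_1\Bigr)^{1/2}\Bigl(\int_{\mathbb S^1}\int_I(\partial_{y_1}v_\eps)^2\,dy_2\,dy_1\Bigr)^{1/2},
\]
and the second factor is $\lesssim\sqrt\eps$ by $\Theta_3(y_0)\le C\eps^2$. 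Thus it suffices to prove $\int\phi\lesssim\eps^2$.

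The main obstacle is this $L^1$ bound on $\phi$. Split $\mathbb S^1=E\cup E^c$ with
\[
E:=\{y_1:\theta_1(v_\eps(y_0,y_1,\cdot))\le c_1,\ \theta_2(v_\eps(y_0,y_1,\cdot))\le c_2\}.
\]
Shrinking $c_1$ so that \eqref{sstar} forces $|s_*|\le c_3\rho$, Proposition \ref{proposition1} applies pointwise on $E$ and, via $\|w\|_{L^2(I)}^2\le\eps\,\|w\|_{H^1_\eps(I)}^2$, yields $\phi(y_1)\lesssim\eps\,\theta_1(v_\eps)+\eps\, e^{-c/\eps}$ on $E$. The decisive estimate is $|E^c|\lesssim\eps^2$. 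This uses two Chebyshev steps: first, $|\{\theta_2>c_2\}|\le \Theta_2(y_0)/c_2\lesssim\eps^2$; second, Lemma \ref{L.gsj1} furnishes the pointwise lower bound $\theta_1\ge -Ce^{-c/\eps}$ on $\{\theta_2\le c_2\}$, which combined with $\Theta_1(y_0)\le C\eps^2$ and the crude bound $\theta_1\ge -c_0$ on $\{\theta_2>c_2\}$ gives $\int_{\{\theta_2\le c_2\}}\max(\theta_1,0)\lesssim \eps^2$, whence $|\{\theta_1>c_1\}\cap\{\theta_2\le c_2\}|\lesssim\eps^2$. The same bookkeeping gives $\int_E\theta_1\lesssim\eps^2$, so $\int_E\phi\lesssim\eps^3$. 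On $E^c$ the crude pointwise bound $\phi\le C$ -- valid because $|v_\eps|$ is uniformly bounded under the construction in Proposition \ref{old.results} -- gives $\int_{E^c}\phi\lesssim|E^c|\lesssim\eps^2$. Summing, $\int\phi\lesssim\eps^2$, which plugged into the Cauchy--Schwarz estimate above yields $\int|\phi'|\lesssim\eps^{3/2}$, and the Sobolev embedding then closes the argument.
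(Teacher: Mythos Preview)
Your strategy---bounding $\|\phi\|_{L^\infty(\mathbb S^1)}$ via $W^{1,1}(\mathbb S^1)\hookrightarrow L^\infty$---repackages the same ingredients the paper uses. The paper instead works directly with $\sqrt\phi$: it takes the good set $\mathcal G=\{\theta_j\le\eps^{3/2}\}$, where Proposition~\ref{proposition1} gives $\sqrt\phi\lesssim\eps^{5/4}$, observes $|\mathcal G^c|\lesssim\eps^{1/2}$ by Chebyshev, and for each bad point $y_1^b$ picks a good $y_1^g$ within distance $\lesssim\eps^{1/2}$ and applies the fundamental theorem of calculus plus Cauchy--Schwarz to get $|\sqrt{\phi(y_1^b)}-\sqrt{\phi(y_1^g)}|\le|y_1^b-y_1^g|^{1/2}\bigl(\int_{\mathbb S^1}\|\partial_{y_1}v_\eps\|_{L^2(I)}^2\bigr)^{1/2}\lesssim\eps^{1/4}\cdot\eps^{1/2}=\eps^{3/4}$. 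Your global Sobolev approach and your sharper bad-set estimate $|E^c|\lesssim\eps^2$ are a legitimate alternative organization.

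There is, however, a genuine gap: the assertion that $|v_\eps|$ is uniformly bounded is not among the conclusions of Proposition~\ref{old.results}, and there is no maximum principle for the wave equation \eqref{nlw} that would supply it. The $\Theta_j$ bounds give only $L^2$-type control integrated over $\mathbb S^1\times I$, and in $2{+}1$ dimensions the conserved energy does not embed into $L^\infty$. So your crude bound $\phi\le C$ on $E^c$---and hence $\int_{E^c}\phi\lesssim\eps^2$---is unjustified as written. The repair uses your own Lipschitz estimate for $\sqrt\phi$: on $E$ you already have $\phi\lesssim\eps\theta_1+\eps e^{-c/\eps}\le C\eps$ pointwise, and since $|E^c|\lesssim\eps^2$ every $y_1$ lies within distance $|E^c|$ of some $y_1^g\in E$, whence $\sqrt{\phi(y_1)}\le\sqrt{\phi(y_1^g)}+\int_{\mathbb S^1}\|\partial_{y_1}v_\eps\|_{L^2(I)}\,dr\le C\sqrt\eps+C\sqrt\eps\le C$. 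With $\phi\le C$ established this way, your argument closes. (Note that this repair step is essentially the paper's argument, so one could also just stop there.)
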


This lemma is the only point where we need the assumption that $n=2$;
our argument relies on a $1d$ Sobolev embedding $C^{1/2} \hookrightarrow H^1$
in the tangential variable $y_1$.

\begin{proof}
For $y_0\in (-T_1,T^1)$, we define
\[
\mathcal G(y_0)  :=  \{ y_1\in \mathbb S^1  : \theta_j(v_\eps(y_0, y_1, \cdot)) \le \eps^{3/2} \mbox{ for } j = 1,2 \},
\]
If $y_1\in \mathcal G(y_0)$, then $v_\eps(y_0, y_1, \cdot)$ satisfies the
hypothesis \eqref{theta1c1} of Proposition \ref{proposition1},
and as a result, $s_*(y_0,y_1) \lesssim \eps^{3/4}$ and 
\begin{equation}\label{gs7}
\| v_\eps(y_0, y_1, \cdot) - \tau_{s_*(y_0,y_1)}Q_\eps\|_{H^1_\eps(I)} \lesssim \theta_1(
v_\eps(y_0, y_1, \cdot)) + C e^{-c/\eps} \lesssim  \eps^{3/2}.
\end{equation}
In particular, \eqref{L2again} holds.
So we only need to show that \eqref{L2again} still holds
for $(y_0,y_1)$ if $y_1\in 
\mathcal {B}(y_0) := \mathbb S^1 \setminus \mathcal G(y_0)$.

Toward this end, first note that 
for $j=1,2$, 
\begin{equation}\label{Th.th}
\Theta_j(y_0) = \int_{\mathbb{S}^1}\theta_j(v_\eps(y_0,y_1, \cdot)) dy_1.
\end{equation}
This is a direct consequence of the definitions (see Section \ref{priorwork}).
We also know from Proposition \ref{old.results} that $\Theta_j(y_0)\le C \eps^2$
for $j=1,2$ and for all $y_0\in  (-T_1,T^1)$.
It therefore follows via Chebyshev's inequality that 
\begin{equation}\label{goodset1} 
|\mathcal {B}(y_0)|\leq C \eps^{1/2} \qquad\mbox{ for all }y_0\in (-T_1,T^1).
\end{equation}

We now fix $(y_0, y_1^b)$ such that $y_1^b\in \mathcal B(y_0)$. In view of \eqref{goodset1}, we can find some $y_1^g\in \mathcal G(y_0)$ such that
$|y_1^b - y_1^g| \le C \eps^{1/2}$.
Let us write $s_*^g := s_*(y_0, y_1^g)$. 
Then using \eqref{gs7}
and the triangle inequalty,
\begin{equation}\label{bs8}
\| v_\eps(y_0, y_1^b ,\cdot) - \tau_{s_*^g}Q_\eps\|_{L^2(I)}
\le
\| v_\eps(y_0, y_1^b ,\cdot) -  v_\eps(y_0, y_1^g ,\cdot) \|_{L^2(I)} + C \eps^{5/2}.
\end{equation}
Next, we use the Fundamental Theorem of Calculus and the Cauchy-Schwarz inequality to compute
\begin{align*}
|v(y_{0},y_{1}^g,y_{2})-v(y_{0},y_{1}^b,y_{2})|^2
&=\ds{\left| \int_{y_{1}^g}^{y_1^b}\frac{\partial v}{\partial y_{1}}(y_{0},r,y_{2})~dr \right|^2}
\\
&
\le
\left|y_{1}^g-y_{1}^b\right|\ds{\int_{y_{1}^g}^{y_1^b}\left|\frac{\partial v}{\partial y_{1}}(y_{0},r,y_{2})\right|^2\,dr}.
\end{align*}
We integrate over $y_{2}$ and use \eqref{Thetas.est} to find that 
\[
\|v(y_{0},y_{1}^g,\cdot)-v(y_{0},y_{1}^b,\cdot)\|^2_{L^2}
\lesssim \eps^{1/2} \frac{\Theta_3(y_{0})}{\eps} \lesssim \eps^{3/2} \ .
\]
Since $|s_*^g|\lesssim \eps^{3/4}$, this fact and \eqref{bs8} together imply that
$(y_0,y_1)$ satisfy \eqref{L2again}.
\end{proof}

We will also need the following Sobolev-Poincar\'e inequality.

\begin{lem}\label{L.SP}
Assume that $\Omega$ is a bounded,  connected  open set 
in $\R^n$ with Lipschitz boundary.
Then there exists a constant $C = C(\Omega)$ such that if
$u\in BV(\Omega)$ is a function such that
\[
\mathcal L^n( \mbox{supp}(u)) \le \frac 12 
\mathcal L^n(\Omega) .
\]
then 
\begin{equation}\label{sp.ineq}
\int_\Omega |u|^{\frac n{n-1}} dx \le C \left( \int_\Omega |Du| dx \right)^{\frac n{n-1}}
\end{equation}
\end{lem}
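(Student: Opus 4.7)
The plan is to reduce this to the classical Sobolev--Poincaré inequality for BV functions on a bounded Lipschitz domain, and then use the support hypothesis to upgrade the inequality by controlling the mean.

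Concretely, first I would invoke the standard Sobolev--Poincaré inequality: for $\Omega\subset\R^n$ bounded, connected, with Lipschitz boundary, there exists $C(\Omega)$ such that for all $u\in BV(\Omega)$,
\[
\left(\int_\Omega |u-\bar u|^{n/(n-1)}\,dx\right)^{(n-1)/n} \le C(\Omega)\int_\Omega |Du|,
\qquad \bar u := \frac{1}{|\Omega|}\int_\Omega u\,dx.
\]
This is classical (see e.g.\ Ziemer or Ambrosio--Fusco--Pallara); I would just quote it.

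Next I would exploit the support condition. Let $A := \Omega\setminus \operatorname{supp}(u)$, so $u\equiv 0$ a.e.\ on $A$ and $|A| \ge |\Omega|/2$ by hypothesis. Then
\[
|\bar u|\,|A|^{(n-1)/n} = \Bigl(\int_A |\bar u|^{n/(n-1)}\,dx\Bigr)^{(n-1)/n} = \Bigl(\int_A |u-\bar u|^{n/(n-1)}\,dx\Bigr)^{(n-1)/n} \le \Bigl(\int_\Omega |u-\bar u|^{n/(n-1)}\,dx\Bigr)^{(n-1)/n},
\]
so combining with the Sobolev--Poincaré bound and the lower bound $|A|\ge |\Omega|/2$ gives $|\bar u| \le C(\Omega)\int_\Omega |Du|$.

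Finally, the triangle inequality in $L^{n/(n-1)}(\Omega)$ yields
\[
\|u\|_{L^{n/(n-1)}(\Omega)} \le \|u-\bar u\|_{L^{n/(n-1)}(\Omega)} + |\bar u|\,|\Omega|^{(n-1)/n} \le C(\Omega)\int_\Omega |Du|,
\]
and raising both sides to the power $n/(n-1)$ gives \eqref{sp.ineq}. There is really no serious obstacle here; the only minor point to be careful about is the measure-theoretic meaning of $\operatorname{supp}(u)$ for a BV function, which I would handle by taking $A$ to be the complement of $\operatorname{supp}(u)$ in $\Omega$ and noting that $u=0$ a.e.\ on $A$ by definition of the support.
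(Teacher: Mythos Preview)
Your argument is correct. The paper's own proof, however, takes a different (and briefer) route: it simply cites \cite[Theorem 3.51]{afp}, observes that the proof there is written for balls but only uses the relative isoperimetric inequality on $\Omega$, and then cites \cite[4.5.2]{Federer} for the fact that this inequality holds on bounded connected Lipschitz domains.

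The difference is mainly one of packaging. Your approach invokes the mean-subtracted Sobolev--Poincar\'e inequality $\|u-\bar u\|_{L^{n/(n-1)}}\le C|Du|(\Omega)$ as a black box and then uses the support hypothesis to bound $|\bar u|$; the paper instead traces the support hypothesis directly through the coarea/isoperimetric argument in \cite{afp}. Both ultimately rest on the same underlying fact (the relative isoperimetric inequality on Lipschitz domains, which is equivalent to the BV Poincar\'e inequality you quote). Your version has the advantage of being entirely self-contained and not requiring the reader to inspect the proof in \cite{afp}; the paper's version has the advantage of pointing to a precise reference where the result is stated in essentially the desired form.
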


\begin{proof}
This is proved in for example in \cite[Theorem 3.51]{afp}. The proof there
assumes that $\Omega$ is a  ball, but the argument only requires that
the relative isoperimetric inequality hold on $\Omega$, {\em i.e.} that there exist some $C = C(\Omega)$ such that
$
(\mathcal L^n(E ))^{\frac{n-1}n} \le C\  \mbox{Per}_{\Omega}(E)
$
for every $E\subset \Omega$ of finite perimeter such that 
$\mathcal L^n(E ) \le \frac 12 \mathcal L^n(\Omega)$.
This is known to hold for bounded connected Lipschitz domains, see for example 
\cite[4.5.2]{Federer}
so the proof  of \eqref{sp.ineq}  in \cite{afp} applies here.
\end{proof}

\begin{proof}
[Proof of Theorem \ref{main.theorem}]

We must estimate the $H^1_\eps$ norm
of $u_\eps - U_\eps$. We will consider separately the region $\mathcal N'$ near $\Gamma$, where we can use normal coordinates, and its complement $\mathcal M$.

\paragraph{1. Estimates in normal coordinates near $\Gamma$.}

Recall that $V_\eps(y_0, y_1, y_2) = \tau_{s_*(y_0,y_1)}Q_\eps(y_2) = U_\eps \circ \Psi$.
We will first prove that
\begin{equation}\label{v-V}
\int_{\mathbb S^1\times I} \frac 1 \eps (v_\eps -V_\eps)^2 +
\sum_{i=0}^2 \eps[\partial_{y_i}(v_\eps -V_\eps)]^2 dy_1 dy_2 \lesssim \eps^2 \quad
\mbox{ for all }y_0\in (-T_1,T^1).
\end{equation}
To start, using Lemma \ref{L.Linf} implies that hypothesis \eqref{L2c3} of Proposition \ref{proposition1}  is satisfied for every 
$(y_0, y_1)\in (-T_1,T^1)\times\mathbb S^1$.
The proposition then yields
\begin{align*}
&\int_I \frac 1\eps (v_\eps (y_0,y_1, y_2) - V_\eps(y_0,y_1,y_2))^2 dy_2
\\
&\hspace{2em}
+ \eps\int_I\Big( (\partial_{y_2}(v_\eps (y_0,y_1, y_2) - V_\eps(y_0,y_1,y_2))\Big)^2dy_2
\lesssim \theta_1( v_\eps(y_0, y_1, \cdot)) + C e^{-c/\eps}
\end{align*}
for every $(y_0,y_1)$.
Integrating over $\mathbb S^1$ and using \eqref{Th.th} and \eqref{Thetas.est},
we find that
\[
\int_{\mathbb S^1\times I} \frac 1 \eps (v_\eps - V_\eps)^2 + \eps(\partial_{y_2}(v_\eps - V_\eps))^2 \ dy_2\, dy_1 \le C \eps^2
\]
for every $y_0\in (-T_1,T^1)$.
It also follows from \eqref{Thetas.est} that 
\begin{equation}\label{tangential.H1}
\int_{\mathbb S^1\times I} \eps(\partial_{y_0}v_\eps )^2 +
\eps(\partial_{y_1}v_\eps)^2 \ dy_2\, dy_1 \le C \eps^2
\end{equation}
for every $y_0$ as above. Thus, to complete the proof of \eqref{v-V}, it suffices to show
that $V_\eps$ satisfies an estimate similar to \eqref{tangential.H1}.
For this, we compute
\begin{equation}\label{pyjVe}
\partial_{y_j} V_\eps =   -(\tau_{s_*}Q_\eps' ) \, \partial_{y_j}s_*\qquad
\qquad\mbox{ for $j=0,1$}.
\end{equation}
Also, writing $W_\eps := v_\eps - V_\eps$ and differentiating,
\begin{equation}\label{pyjve}
( \tau_{s_*}Q_\eps' ) \, \partial_{y_j}s_*
= 
\partial_{y_j}W_\eps -
\partial_{y_j}v_\eps .
\end{equation}
We want to multiply both sides of this identity by 
$\partial_{y_j}V_\eps= -(\tau_{s_*}Q_\eps' ) \, \partial_{y_j}s_*$ 
and integrate. In order to simplify the term involving $\partial_{y_j}W_\eps$,
we recall from Proposition \ref{prop1} that 
\[
\int_I W_\eps(y_0,y_1, y_2) \, Q_\eps'(y_2 -  s_*(y_0,y_1)) \ dy_2= 0 \qquad
\mbox{ for all }(y_0,y_1).
\]
Differentiating with respect to $y_j$ for $j=0,1$
yields
\[
\int_I (\partial_{y_j}W_\eps) (\tau_{s_*}Q_\eps') \,dy_2 \ = 
\int_I W_\eps\  (\tau_{s_*}Q_\eps'' )(\partial_{y_j}s_*) \ dy_2
\qquad\mbox{ for all }(y_0,y_1).
\]
Since $s_*$ is independent of $y_2$, 
it follows that
\begin{align*}
\int_I (\partial_{y_j}W_\eps) (\tau_{s_*}Q_\eps')(\partial_{y_j}s_*) \,dy_2 \ 
&\le
(\partial_{y_j}s_*)^2 
\| W_\eps \|_{L^2(I)} \| \tau_{s_*}Q_\eps''\|_{L^2(I)} \\
&\overset{\eqref{L2again}}\lesssim
(\partial_{y_j}s_*)^2 
\eps^{3/4} \eps^{-3/2} = \eps^{-3/4}(\partial_{y_j}s_*)^2 
\end{align*}
for all $(y_0,y_1)$.

If we multiply \eqref{pyjve} by $(\tau_{s_*}Q_\eps')\partial_{y_j}s_*$
and integrate first with respect to $y_2$, then with respect to $y_1$,
we therefore deduce that
\begin{equation}\label{gettingthere}
\int_{\mathbb S^1} (\partial_{y_j}s_*)^2\int_{I} (\tau_{s_*}Q_\eps')^2 dy_2\, dy_1 \lesssim
\eps^{-3/4}\int_{\mathbb S^1}(\partial_{y_j}s_*)^2 dy_1
+
\int_{\mathbb S^1\times I}
\partial_{y_j}v_\eps^2 \, dy_2\, dy_1,
\end{equation}
where we have used the elementary estimate
\[
\int_{\mathbb S^1\times I}
(\partial_{y_j}v_\eps) (\partial_{y_j}s_*)( \tau_{s_*}Q_\eps') \, dy_2\, dy_1
\le
\frac 12 \int_{\mathbb S^1\times I}
(\partial_{y_j}v_\eps^2) + (\partial_{y_j}s_* )^2(\tau_{s_*}Q_\eps')^2 \, dy_2\, dy_1,
\]
In addition,
\[
\int_I (\tau_{s_*}Q_\eps')^2 dy_2 = \frac {c_0}\eps + O(e^{-c/\eps})
\ge \frac{c_0}{2\eps}\qquad\mbox{ for every }(y_0,y_1)
\]
for $\eps$ small enough. Thus the first term on the right-hand side of \eqref{gettingthere} can be absorbed by the left-hand side , and we can  finally conclude that
\[
\int_{\mathbb S^1} (\partial_{y_j}s_*)^2 dy_1 \lesssim \eps \int_{\mathbb S^1\times I}
(\partial_{y_j}v_\eps)^2 \lesssim \eps^2 \qquad\mbox{ for all }y_0\in (-T_1, T^1).
\]
With this, we readily deduce from \eqref{pyjVe} that $\| \partial_{y_j} V_\eps\|_{L^2(\mathbb S^1\times I)}^2 \le C \eps$ for $j=0,1$, completing the proof
of \eqref{v-V}.

Also, for every $y_0$, we know from \eqref{sstar} that 
\[
\int_{\mathbb S^1} s_*^2(y_0, y_1) dy_1 \lesssim \int_{\mathbb S^1} (\eps^2 + \theta_1(v_\eps(y_0, y_1, \cdot) )dy_1 \lesssim \eps^2 + \Theta_1(y_0) \lesssim \eps^2,
\]
so we have  proved that $\| s_* (y_0, \cdot)\|_{H^1(\mathbb S^1)} \le C\eps$
for every $y_0$, which is \eqref{sstar.H1}.

\paragraph{2. Estimates in $(t,x)$ coordinates near $\Gamma$.}

Since $(u_\eps - U_\eps) = (v_\eps - V_\eps)\circ \Psi^{-1}$, and because $\Psi$
is a diffeomorphism from $(-T_1, T^1)\times \mathbb S^1\times I$ onto
its image, which contains $\mathcal {N}'$, a simple change of variables
shows that \eqref{v-V} implies that
\[
\| u_\eps - U_\eps\|_{H^1_\eps(\mathcal {N}')} \le C\eps.
\]

\paragraph{3. Estimates in $(t,x)$ coordinates away $\Gamma$.}

To finish the proof, we must estimate $\|u_\eps - U_\eps\|_{H^1_\eps(\mathcal M)}$.

Note that $\mathcal M$ consists of two components,
$\mathcal M \cap \mathcal O$ and $\mathcal M \setminus \mathcal O$,
with $U_\eps = 1$ in the former and $U_\eps = -1$ in the latter. (Recall
that $\mathcal O$ is the region enclosed by $\Gamma$.)

We already know from Proposition \ref{old.results} that $\| D u_\eps \|_{L^2(\mathcal M)}^2
\le C\eps$, and since $Du_\eps = D(u_\eps - U_\eps)$ in $\mathcal M$, it only remains to
prove that
\begin{equation}\label{u-U}
\int_{\mathcal M \cap \mathcal O} (u_\eps-1)^2 +
\int_{\mathcal M \setminus \mathcal O} (u_\eps+1)^2 \le C\eps^3.
\end{equation}
As we will see, these are straightforward consequences of results from \cite{jer0909}.
We first consider $\mathcal M\cap \mathcal O$.

We will write
\[
H(s) := (\frac  13 s^3 - s)^+ =
 \begin{cases}|s - \frac 13 s^3| &\mbox{if } \  \ - \sqrt 3 \le s\le 0, \\
 0&\mbox{otherwise} \ .
\end{cases}
\]
It is easy to see that 
\[
(u_\eps - 1)^2 \lesssim (u_\eps^2-1)^2 + H(u_\eps)^{3/2}.
\]
We already know from \eqref{uep.far} that
$
\int_{\mathcal M\cap \mathcal O}(u_\eps^2-1)^2 \ dx\ dt \lesssim \eps^3,
$
so  to prove \eqref{u-U}, it suffices to show that 
\[
\int_{\mathcal M\cap \mathcal O}H(u_\eps)^{3/2}dx\ dt \lesssim \eps^3.
\]
In doing so, we will use the fact (which motivates the definition of $H$) that
\[
|DH(u_\eps)| \le |u_\eps^2-1| \, |Du_\eps|  \le \frac \eps 2 |Du_\eps|^2 + \frac 1{2\eps}(u_\eps^2-1)^2.
\]
As a result
\[
\int_{\mathcal{M\cap O}} |DH(u_\eps)|\, dx\, dt \lesssim  \eps^2
\]
by Proposition \ref{old.results}.
Therefore, to complete the proof of \eqref{u-U}, it is enough to
note that
\[
\int_{\mathcal{M\cap O}} |H(u_\eps)|^{3/2}\, dx\, dt 
\lesssim 
\left(\int_{\mathcal{M\cap O}} |DH(u_\eps)|\, dx\, dt \right)^{3/2} \lesssim \eps^3.
\]
But this follows from the Sobolev-Poincar\'e estimate in Lemma \ref{L.SP}.
The estimate is applicable here since 
\eqref{u_weakly_close_toA} implies that $\{ (t,x)\in \mathcal M\cap \mathcal O : 
u_\eps(t,x) < 0 \}$ has measure at most $C\eps^{1/2} \le \mathcal L^n(\Omega)$ for $\eps$ small. The same thus holds
for $\{ (t,x)\in \mathcal M\cap \mathcal O  : H(u_\eps)(t,x) \ne 0\}$,
and this is the hypothesis for Lemma \ref{L.SP}.

The argument for $\mathcal M\setminus \mathcal O$
is almost identical. The only point to notice is that, since 
$u_\eps = \sign_{\mathcal O}$ on $-T_*,T^*)\times (\R^2\setminus B(R))$
see \eqref{comp_supp}, it suffices to 
show that 
\[
\int_{(t,x)\in \mathcal M \setminus \mathcal O : |x|< R} (u_\eps+1)^2 \le C\eps^3.
\]
Since the domain of integration here is a bounded connected Lipschitz set whenever $R$ is large enough, Lemma \ref{L.SP} applies, and we may now argue exactly as above.
\end{proof}

\paragraph{Acknowledgement.}
The research of both authors was partially supported by the Natural Sciences and
Engineering Research Council of Canada under operating grants 2017-04313 (El Smaily) and 261955 (Jerrard).

\end{document}